\documentclass{article}

\usepackage{amsmath}
\usepackage{amsthm}
\usepackage{amssymb}
\usepackage{bm}
\usepackage{bbm}
\usepackage{mathbbol}
\usepackage{latexsym}

\newtheorem{theorem}{Theorem}
\newtheorem{lemma}[theorem]{Lemma}
\newtheorem{corollary}[theorem]{Corollary}

\newtheorem{remark}[theorem]{Remark}

\newcommand{\ds}{\displaystyle}
\newcommand{\dd}{\mathrm{d}}
\newcommand{\E}{\mathrm{E}}
\newcommand{\one}{\mathbbm{1}}
\newcommand{\Z}{\mathbbm{Z}}
\newcommand{\M}{\mathcal{M}}

\allowdisplaybreaks

\begin{document}

\begin{center}{\Large\bf 
The Gated-Service M/GI/1 Queue with Single Vacations

\smallskip

and Its Application to Batch-Service Queues

}

\bigskip

Tetsuya Takine

\bigskip

Department of Information and Communications Technology, \\
Graduate School of Engineering, The University of Osaka, Japan
\end{center}

\bigskip

\noindent
\textbf{Abstract}

\noindent
In this paper, we consider the M/GI/1 queue with single vacations
under the gated service discipline. We obtain the probability
generating function of the stationary queue length, the
Laplace-Stieltjes transform of the system delay distribution in steady
state, and the joint transform of the busy cycle length and the number
of customers served in the busy cycle. Furthermore, as an application,
we consider a batch-service M/G/1 queue, where service times depend on
the number of customers in batch.

\bigskip

\noindent
\textbf{Keywords} M/GI/1 queue, gated service, single vacations,
batch-service, branching process, limiting distribution, final value
theorem

\bigskip

\noindent
\textbf{Mathematics Subject Classification}
60K25, 60J80

\section{Introduction}

We consider the M/G/1 queue with single vacations under the gated
service discipline.  Customers arrive according to a Poisson process
with rate $\lambda$.  Service times of customers are independent and
identically distributed (i.i.d.) according to a general distribution
$H(x)$ ($x \geq 0$). We assume $\rho=\lambda \E[H] < 1$, where $H$
denotes a generic random variable for service times.  Let
$h^*(s)=\E[\exp(-s H)]$ ($s > 0$). The server serves customers
according to the gated service discipline with single vacations.
\begin{narrow}\begin{itemize}
\item[(i)] 
When the system is empty and the server is idle, the server waits for
the next arrival and starts the service as soon as the next customer
arrives. After the service completion, the server takes a vacation.

\item[(ii-a)]
If there are $k$ ($k=1,2,\ldots$) waiting customers on return from a vacation,
the server serves those $k$ customers successively and takes another vacation.

\item[(ii-b)]
If the system is empty on return from a vacation, 
the server becomes idle. 
\end{itemize}\end{narrow}
We assume that vacation lengths are i.i.d.\ according to a general
distribution $V(x)$ ($x \geq 0$) and $\E[V] < \infty$, where $V$
denotes a generic random variable for vacation lengths. Let 
$v^*(s) = \E[\exp(-s V)]$ ($s > 0$).

M/GI/1 queues with gated vacations have been studied in the
literature, e.g., \cite{Takine92} and references therein.  Most of
them, however, consider so-called multiple-vacation queues, where if
the server finds the empty system on return from a vacation, the
server takes another vacation immediately, and to the best of our
knowledge, the above-mentioned single-vacation model has not been
considered. In \cite{Takagi91} and \cite{Tian06}, the following model
is called the gated-service M/G/1 queue with single vacations: The
server becomes idle after taking a vacation if no customers arrive
during the vacation, regardless of whether waiting customers (who
arrived during services of other customers) exist or not, and the
server starts to serve customers when the next arrival occurs.

In what follows, we first consider the queue length and system delay
distributions in steady state. We then consider the joint distribution
of the length of a busy cycle and the number of customers served in
the busy cycle. Finally, we provide some remarks on the MAP/GI/1 queue
with gated vacations.  For convenience, let $\{H_i\}_{i=1,2,\ldots}$
(resp.\ $\{V_i\}_{i=1,2,\ldots}$) denote a sequence of i.i.d.\ random
variables whose distribution function is given by $H(x)$
(resp.\ $V(x)$).

\section{Queue length and system delay distributions}

Let $N_{\lambda}(x)$ ($x \geq 0$) denote the number of Poisson
arrivals with rate $\lambda$ during an interval of length $x$.  We
then define the probability generating functions (PGF) $a_H^*(z)$ and
$a_V^*(z)$ as
\[
a_H^*(z) = \E[z^{N_{\lambda}(H)}] = h^*(\lambda-\lambda z),
\qquad
a_V^*(z) = \E[z^{N_{\lambda}(V)}] = v^*(\lambda-\lambda z).
\]
Let $L_{\rm B}$ and $L_{\rm E}$ denote the number of customers at the
beginning of a vacation and at the end of a vacation in steady state.
We define $\ell_{\rm B}^*(z)$ and $\ell_{\rm E}^*(z)$ as PGF's of
$L_{\rm B}$ and $L_{\rm E}$.
\[
\ell_{\rm B}^*(z) = \E[z^{L_{\rm B}}],
\qquad
\ell_{\rm E}^*(z) = \E[z^{L_{\rm E}}].
\]
By definition, we have
\begin{equation}
\ell_{\rm E}^*(z) = \ell_{\rm B}^*(z) a_V^*(z),
\qquad
\ell_{\rm B}^*(z) 
= 
\ell_{\rm E}^*( a_H^*(z) ) - \ell_{\rm E}^*(0)
+
\ell_{\rm E}^*(0) a_H^*(z).
\label{eq:ell_B(z)-ell_E(z)}
\end{equation}

Let $L$ denote the stationary queue length.  It is clear that the
model is a special case of the M/GI/1 queue with generalized vacations
\cite{Fuhrmann85}, so that the stochastic decomposition holds.
\begin{equation}
L = L_{\rm M/G/1} + L_{\rm V},
\label{eq:stochastic_decomposition}
\end{equation}
where $L_{\rm M/G/1}$ denote the stationary queue length in the
ordinary M/GI/1 queue characterized by the arrival rate $\lambda$ and
the service time distribution $H(x)$ and $L_{\rm V}$ denote the
conditional queue length in steady state, given that the server does
not serve customers. Note that $L_{\rm M/G/1}$ and $L_{\rm V}$ are
independent.  Because
\[
\Pr(\mbox{the server is idle} \mid 
\mbox{the server is not serving customers}) 
=
\frac{\ell_{\rm E}^*(0)\times 1/\lambda}
{\E[V]+\ell_{\rm E}^*(0) \times 1/\lambda},
\]
we have
\begin{equation}
L_{\rm V} = \begin{cases}
0, & \mbox{with probability } 
\ds\frac{\ell_{\rm E}^*(0)}{\lambda\E[V]+\ell_{\rm E}^*(0)},
\\[4mm]
L_{\rm B} + N_{\lambda}(\hat{V}),
 & \mbox{with probability } 
\ds\frac{\lambda \E[V]}{\lambda \E[V]+\ell_{\rm E}^*(0)},
\end{cases}
\label{eq:L_V}
\end{equation}
where $\hat{V}$ denotes the equilibrium random variable of $V$, which is 
independent of $L_{\rm B}$.

We define $\ell^*(z)$ as the PGF of the stationary queue length $L$. 
\[
\ell^*(z) = \E[z^L], 
\quad
|z| < 1.
\]
It then follows from (\ref{eq:stochastic_decomposition}) and 
(\ref{eq:L_V}) that 
\begin{equation}
\ell^*(z) 
=
\ell_{\rm M/G/1}^*(z)
\bigg[
\frac{\ell_{\rm E}^*(0)}{\lambda\E[V]+\ell_{\rm E}^*(0)}
+
\frac{\lambda\E[V]}{\lambda\E[V]+\ell_{\rm E}^*(0)}
\ell_{\rm B}^*(z) 
\frac{1-a_V^*(z)}{\lambda\E[V](1-z)}
\bigg],
\quad
|z| < 1,
\label{eq:ell^*(z)-decomposition}
\end{equation}
where $\ell_{\rm M/G/1}^*(z)$ denotes the PGF of the stationary queue
length in the ordinary M/GI/1 queue.
\begin{equation}
\ell_{\rm M/G/1}^*(z)
=
\frac{(1-\rho) (z-1) a_H^*(z)}{z - a_H^*(z)},
\quad
|z| < 1.
\label{eq:ell_M/G/1}
\end{equation}
We observe from (\ref{eq:ell_B(z)-ell_E(z)}) and
(\ref{eq:ell^*(z)-decomposition}) that if $\ell_{\rm E}^*(z)$ is
obtained, $\ell^*(z)$ is determined completely.  In what follows, we
derive $\ell_{\rm E}^*(z)$.

Let $Q_n$ ($n=0,1,\ldots$) denote the number of customers at the end
of the $n$th vacation. It is clear that $\{Q_n\}$ forms a
discrete-time Markov chain on $\Z^+=\{0,1,\ldots\}$. 
Because 
\begin{align*}
\Pr( Q_{n+1} = j \mid Q_n=0) 
&>
\Pr(N_{\lambda}(H)=0)\Pr(N_{\lambda}(V)=j)
>0,
\\
\Pr( Q_{n+1} = j \mid Q_n=i) 
&>
\Pr(N_{\lambda}(H_1+H_2+\cdots+H_i)=0)\Pr(N_{\lambda}(V)=j)
>0,
\quad
i=1,2,\ldots,
\end{align*}
for all $j \in \Z^+$, the Markov chain $\{Q_n\}$ is irreducible and
aperiodic. Furthermore, because $\rho < 1$, $\E[V] < \infty$, and
for $j=1,2,\ldots$, 
\[
\E[Q_{n+1} \mid Q_n = j] < j\ \Leftrightarrow\ 
\lambda \big(j \E[H] + \E[V]\big) < j\ 
\Leftrightarrow\ j > \frac{\lambda \E[V]}{1-\rho},
\]
there exists $j^*$ such that $\E[Q_{n+1} \mid Q_n = j] < j$ for all $j
\geq j^*$, so that $\{Q_n\}$ is positive recurrent.  Therefore, if
$\rho < 1$ and $\E[V] < \infty$, the limiting distribution of
$\{Q_n\}$ exists, i.e.,
\begin{equation}
\lim_{n \to \infty} \Pr(Q_n = k) = \Pr(L_{\rm E} =k),
\quad
k=0,1,\ldots.
\label{eq:limiting_dist}
\end{equation}
Note that the limiting distribution is independent of the initial
state $Q_0$. 

We define $q_n^*(z)$ ($n=0,1,\ldots$) as the PGF of $Q_n$.
\[
q_n^*(z) = \E[z^{Q_n}],
\quad
n=0,1,\ldots.
\]

\begin{theorem}\label{theorem:q_n^*(z)}
The PGF $q_n^*(z)$ ($n=1,2,\ldots$) satisfies
\begin{equation}
q_n^*(z) 
= 
q_0^*(\psi_n^H(z)) \prod_{k=1}^n \psi_k^V(z)
-
\sum_{j=1}^n 
q_{n-j}^*(0) \big( 1 - \psi_j^H(z) \big) \prod_{k=1}^j \psi_k^V(z),
\quad 
n=1,2,\ldots,
\label{eq:theorem:q_n^*(z)}
\end{equation}
where $\psi_n^H(z)$ and $\psi_n^V(z)$ ($n=1,2,\ldots$) are defined
recursively as
\begin{align}
\psi_1^H(z) 
&=
a_H^*(z),
\qquad
\psi_n^H(z) 
= 
a_H^*\big(\psi_{n-1}^H(z)\big)
=
\psi_{n-1}^H\big(a_H^*(z)\big),
\quad 
n=2,3,\ldots,
\label{eq:psi^H}
\\
\psi_1^V(z) 
&=
a_V^*(z), 
\qquad 
\psi_n^V(z) = a_V^*\big(\psi_{n-1}^H(z)\big),
\quad 
n=2,3,\ldots.
\label{eq:psi^V}
\end{align}
\end{theorem}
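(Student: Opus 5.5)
First I will derive a one-step recursion for the transform, in the same way the stationary relation (\ref{eq:ell_B(z)-ell_E(z)}) was derived. Condition on $Q_{n-1}$.

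If $Q_{n-1}=i\geq 1$, the server serves the $i$ gated customers. During these services the Poisson arrivals are $N_\lambda(H_1+\cdots+H_i)$, whose PGF given $i$ is $a_H^*(z)^i$. If $Q_{n-1}=0$, the server idles until an arrival, serves that one customer, and then starts the next vacation with $N_\lambda(H)$ customers present.

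The number at the start of the $n$th vacation therefore has PGF
\[
q_{n-1}^*(a_H^*(z)) - q_{n-1}^*(0) + q_{n-1}^*(0)\,a_H^*(z).
\]
The vacation then adds an independent $N_\lambda(V)$, which gives
\[
q_n^*(z) = a_V^*(z)\Big[ q_{n-1}^*(a_H^*(z)) - q_{n-1}^*(0)\big(1-a_H^*(z)\big)\Big],\qquad n\geq 1.
\]
The only point needing care is the idle case: the idle period and the single service together replace the $z^0=1$ term by $a_H^*(z)$.

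Next I will iterate this recursion by induction on $n$. For $n=1$ it is exactly (\ref{eq:theorem:q_n^*(z)}), since $\psi_1^H=a_H^*$ and $\psi_1^V=a_V^*$.

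For the inductive step, assume (\ref{eq:theorem:q_n^*(z)}) holds for $n-1$ as an identity in $z$. Substitute $z\mapsto a_H^*(z)$ into it. The composition rules (\ref{eq:psi^H}) and (\ref{eq:psi^V}) give
\[
\psi_k^H(a_H^*(z))=\psi_{k+1}^H(z),\qquad \psi_k^V(a_H^*(z))=\psi_{k+1}^V(z).
\]
The second holds because $\psi_k^V(w)=a_V^*(\psi_{k-1}^H(w))$ and $\psi_{k-1}^H(a_H^*(z))=\psi_k^H(z)$, with $\psi_0^H(z)=z$ for $k=1$. Hence
\[
q_{n-1}^*(a_H^*(z)) = q_0^*(\psi_n^H(z))\prod_{k=2}^{n}\psi_k^V(z) - \sum_{j=1}^{n-1} q_{n-1-j}^*(0)\big(1-\psi_{j+1}^H(z)\big)\prod_{k=2}^{j+1}\psi_k^V(z).
\]

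Multiplying by $a_V^*(z)=\psi_1^V(z)$ completes each product to $\prod_{k=1}^{\cdot}$. Reindexing $j+1\to j$ turns the sum into the terms $j=2,\dots,n$ of the claimed sum. The remaining term $-a_V^*(z)\,q_{n-1}^*(0)\big(1-a_H^*(z)\big)$ is exactly the $j=1$ term. This yields (\ref{eq:theorem:q_n^*(z)}) for $n$.

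The main obstacle is bookkeeping: keeping the index shift of $\psi^V$ consistent under composition, in particular that $\psi^V_{k}$ uses $\psi^H_{k-1}$ and not $\psi^H_k$. The probabilistic step is routine once the idle-period case is handled.
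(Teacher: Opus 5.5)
Your proposal is correct and follows essentially the same route as the paper. It derives the one-step recursion $q_n^*(z)=a_V^*(z)\big[q_{n-1}^*(a_H^*(z))-q_{n-1}^*(0)(1-a_H^*(z))\big]$, then inducts by substituting $z\mapsto a_H^*(z)$ into the hypothesis, using $\psi_k^H(a_H^*(z))=\psi_{k+1}^H(z)$ and $\psi_k^V(a_H^*(z))=\psi_{k+1}^V(z)$, and reindexing. Your explicit conditioning argument for the recursion, which the paper simply asserts ``by definition,'' and your index bookkeeping are both sound.
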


\begin{remark}
If we model customers served in a busy period of the ordinary M/G/1
queue by a Galton-Watson branching process \cite{Athreya72},
$\psi_n^H(z)$ denote the number of customers in the $n$th
generation. Similarly, $\psi_n^V(z)$ can be related to the number of
customers in the $n$th generation in a delayed busy period, where the
service time of the first customer follows the distribution $V(x)$.
\end{remark}

\begin{proof}
We prove Theorem \ref{theorem:q_n^*(z)} by induction. 
By definition, we have
\[
q_1^*(z) 
= 
\big(q_0^*(a_H^*(z)) - q_0^*(0) + q_0^*(0) a_H^*(z)\big) a_V^*(z)
=
q_0^*(\psi_1^H(z)) \psi_1^V(z) 
-  
q_0^*(0) \big(1 - \psi_1^H(z)\big) \psi_1^V(z),
\]
so that Theorem \ref{theorem:q_n^*(z)} holds for $n=1$.
Suppose Theorem \ref{theorem:q_n^*(z)} holds for some $n$ 
($n=1,2,\ldots$). It then follows that
\begin{align}
q_{n+1}^*(z) 
&= 
\big(q_n^*(a_H^*(z)) - q_n^*(0) + q_n^*(0) a_H^*(z)\big) a_V^*(z)
\label{eq:q_n=q_{n-1}}
\\
&=
\bigg[
q_0^*(\psi_n^H(a_H^*(z))) \prod_{k=1}^n \psi_k^V(a_H^*(z)) 
-
\sum_{j=1}^n 
q_{n-j}^*(0) \big( 1 - \psi_j^H(a_H^*(z)) \big) 
\prod_{k=1}^j \psi_k^V(a_H^*(z)) 
\bigg]
a_V^*(z)
\notag
\\
& \qquad {} 
-
q_n^*(0) \big( 1 - a_H^*(z)\big) a_V^*(z)
\notag
\\
&=
q_0^*(\psi_{n+1}^H(z)) \bigg(\prod_{k=1}^n \psi_{k+1}^V(z) \bigg)) \psi_1^V(z)
-
\sum_{j=1}^n 
q_{n-j}^*(0) \big( 1 - \psi_{j+1}^H(z) \big) 
\bigg(\prod_{k=1}^j \psi_{k+1}^V(z) \bigg) \psi_1^V(z)
\notag
\\
& \qquad {} 
-
q_n^*(0) \big( 1 - \psi_1^H(z)\big) \psi_1^V(z)
\notag
\\
&=
q_0^*(\psi_{n+1}^H(z)) \prod_{k=1}^{n+1} \psi_{k+1}^V(z)
-
\sum_{j=2}^{n+1}
q_{n+1-j}^*(0) \big( 1 - \psi_j^H(z) \big) 
\prod_{k=1}^j \psi_k^V(z) 
-
q_n^*(0) \big( 1 - \psi_1^H(z)\big) \psi_1^V(z),
\notag
\end{align}
which shows that 
Theorem \ref{theorem:q_n^*(z)} holds for $n+1$.
\end{proof}

It follows from (\ref{eq:limiting_dist}) that 
\begin{equation}
\lim_{n \to \infty} q_n^*(z) = \ell_E^*(z),
\quad
|z| < 1.
\label{eq:limiting_distribution}
\end{equation}
To obtain $\ell_E^*(z)$, we use the the final value theorem of the
$Z$-transform.
\begin{lemma}[Final value theorem of the $Z$-transform]\label{lemma:FVT}
If $\{a_n\}_{n=1,2,\ldots}$ has a finite limit $a^*$, then 
\[
a^* 
= 
\lim_{n \to \infty} a_n 
=
\lim_{\omega \to 1-} (1-\omega) \sum_{n=1}^{\infty} a_n \omega^n
\]
\end{lemma}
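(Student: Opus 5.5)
The plan is the classical Abelian argument: write the limit as a Cesàro-type average and control the error through the geometric weights. Set $a^* = \lim_{n\to\infty} a_n$. Since a convergent sequence is bounded, $\sum_{n=1}^{\infty} a_n \omega^n$ converges absolutely for $0 \le \omega < 1$, so the right-hand side is well defined. Next use the identity
\[
(1-\omega)\sum_{n=1}^{\infty} \omega^n = \omega,
\]
which gives
\[
(1-\omega) \sum_{n=1}^{\infty} a_n \omega^n - a^*
=
(1-\omega) \sum_{n=1}^{\infty} (a_n - a^*) \omega^n - (1-\omega) a^* .
\]
The last term tends to $0$ as $\omega \to 1-$, so it suffices to show that the first term on the right does too.

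Fix $\varepsilon > 0$ and choose $N$ such that $|a_n - a^*| < \varepsilon$ for all $n > N$. Split the sum into a head $n \le N$ and a tail $n > N$. For the head, with $M = \sup_n |a_n - a^*| < \infty$, we get
\[
(1-\omega)\sum_{n=1}^{N} |a_n - a^*| \omega^n \le (1-\omega) N M \to 0 .
\]
For the tail,
\[
(1-\omega)\sum_{n > N} |a_n - a^*| \omega^n \le \varepsilon (1-\omega)\sum_{n>N} \omega^n \le \varepsilon .
\]
Hence $\limsup_{\omega\to 1-} \bigl| (1-\omega)\sum_{n} a_n\omega^n - a^* \bigr| \le \varepsilon$, and since $\varepsilon$ is arbitrary, the limit equals $a^*$.

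No step is really an obstacle. The only points needing care are boundedness, which ensures convergence of the series and a finite head bound $M$, and keeping the index starting at $n=1$, which produces the factor $\omega$ rather than $1$; that factor tends to $1$ and therefore does not affect the limit. If the sequence is complex-valued, as when $a_n = q_n^*(z)$ for $|z|<1$ in the application to (\ref{eq:limiting_distribution}), the same estimates hold verbatim with absolute values.
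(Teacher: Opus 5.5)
Your proof is correct and follows essentially the same route as the paper's Appendix~A: you subtract $a^*$ using $(1-\omega)\sum_{n\ge 1}\omega^n=\omega$, then split the sum at $N$ into a finite head bounded by a constant times $(1-\omega)$ and a tail bounded by $\varepsilon$. Your $\limsup$ formulation (with $NM$ in place of the paper's $c=\sum_{n=1}^N|b_n|$) is slightly cleaner than the paper's explicit choice of $\delta$, and your remarks on absolute convergence and complex-valued $a_n$ fill in details the paper leaves implicit.
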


For completeness, we provide the proof of Lemma \ref{lemma:FVT} in
Appendix \ref{appendix:proof_lemma:FVT}.

\begin{theorem}\label{theorem:ell_E^*(z)}
If $\rho =\lambda \E[H] < 1$ and $\E[V] < \infty$, $\ell_E^*(z)$ is given
by
\begin{equation}
\ell_E^*(z)
=
\prod_{k=1}^{\infty} \psi_k^V(z)
-
\ell_E^*(0)
\sum_{n=1}^{\infty}
\big( 1 - \psi_n^H(z) \big) \prod_{k=1}^n \psi_k^V(z),
\label{eq:ell_E^*(z)}
\end{equation}
where 
\begin{equation}
\ell_E^*(0)
=
\ds\frac{\ds\prod_{k=1}^{\infty} \psi_k^V(0)}
{1 + \ds\sum_{n=1}^{\infty}
\big( 1 - \psi_n^H(0) \big)
\ds\prod_{k=1}^n \psi_k^V(0)}.
\label{eq:ell_E^*(0)}
\end{equation}
\end{theorem}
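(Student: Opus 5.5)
The plan is to pass to the limit $n\to\infty$ in (\ref{eq:theorem:q_n^*(z)}) of Theorem \ref{theorem:q_n^*(z)}. The convolution sum there will be handled with the final value theorem (Lemma \ref{lemma:FVT}). Throughout, fix $z$ with $|z|<1$; for $z\in[0,1)$ all quantities are real and monotone, and the complex case follows by analytic continuation or by dominated convergence.

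\textbf{Step 1: behaviour of $\psi_n^H$ and $\psi_n^V$.} Since $\rho<1$, the Galton--Watson process with offspring PGF $a_H^*$ is subcritical. Hence $\psi_n^H(z)\to 1$ for $|z|\le 1$, and
\[
1-\psi_n^H(z)=O(\rho^n),
\]
because $|1-a_H^*(w)|\le \rho|1-w|$ on $|w|\le 1$ by the mean value bound $|a_H^{*\prime}|\le \rho$. Consequently
\[
|1-\psi_{k}^V(z)|=|1-a_V^*(\psi_{k-1}^H(z))|\le \lambda\E[V]\,|1-\psi_{k-1}^H(z)|=O(\rho^{k}),
\]
using $\E[V]<\infty$. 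Since $\sum_k|1-\psi_k^V(z)|<\infty$, the infinite product $\Psi(z):=\prod_{k=1}^\infty\psi_k^V(z)$ converges. Moreover $|\prod_{k=1}^n\psi_k^V(z)|\le 1$, so the series $S(z):=\sum_{n}(1-\psi_n^H(z))\prod_{k\le n}\psi_k^V(z)$ converges absolutely.

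\textbf{Step 2: limit of the first term.} The first term is $q_0^*(\psi_n^H(z))\prod_{k\le n}\psi_k^V(z)$. Since $\psi_n^H(z)\to1$ and $q_0^*$ is continuous on the closed disk, it tends to $q_0^*(1)\Psi(z)=\Psi(z)$.

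\textbf{Step 3: the convolution term.} Write it as $\sum_{j=1}^n q_{n-j}^*(0)\,c_j(z)$ with $c_j(z)=(1-\psi_j^H(z))\prod_{k\le j}\psi_k^V(z)$. I would apply Lemma \ref{lemma:FVT} to $a_n=q_n^*(z)$, whose limit is $\ell_E^*(z)$ by (\ref{eq:limiting_distribution}). Multiplying (\ref{eq:theorem:q_n^*(z)}) by $\omega^n$ and summing gives a Cauchy product for the convolution term:
\[
\Big(\sum_{m\ge0}q_m^*(0)\omega^m\Big)\Big(\sum_{j\ge1}c_j(z)\omega^j\Big).
\]
Multiplying by $(1-\omega)$ and letting $\omega\to1-$, the first factor times $(1-\omega)$ tends to $\ell_E^*(0)$, again by Lemma \ref{lemma:FVT} and (\ref{eq:limiting_distribution}) at $z=0$. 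The second factor tends to $S(z)$ by Abel's theorem, since $\sum c_j$ converges absolutely. The first term contributes $\Psi(z)$ by Step 2 together with Lemma \ref{lemma:FVT}. Alternatively, one can avoid transforms entirely by dominated convergence: $q_{n-j}^*(0)\to\ell_E^*(0)$ and $|q_{n-j}^*(0)c_j|\le|c_j|$, which is summable. Either route yields (\ref{eq:ell_E^*(z)}).

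\textbf{Step 4: determine $\ell_E^*(0)$.} Set $z=0$ in (\ref{eq:ell_E^*(z)}) to get
\[
\ell_E^*(0)=\Psi(0)-\ell_E^*(0)S(0),
\]
and solve the linear equation, giving (\ref{eq:ell_E^*(0)}). This requires $1+S(0)\neq0$, which holds because every term of $S(0)$ is nonnegative, as $\psi$'s take values in $[0,1]$ at real arguments.

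The main obstacle is Step 1: establishing the geometric decay of $1-\psi_n^H(z)$ and the convergence of the infinite product of the $\psi_k^V$. This is exactly where both $\rho<1$ and $\E[V]<\infty$ are used. The remaining steps are routine limit interchanges.
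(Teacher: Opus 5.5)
Your proposal is correct and follows essentially the same route as the paper: take the limit in the formula of Theorem~\ref{theorem:q_n^*(z)} via Lemma~\ref{lemma:FVT}, treating the convolution term as a Cauchy product, then set $z=0$ and solve for $\ell_E^*(0)$. The differences are only in the technical justifications: you obtain summability from the geometric bound $|1-\psi_n^H(z)|\le\rho^n|1-z|$, which also covers complex $z$ and the convergence of $\prod_k\psi_k^V(z)$, whereas the paper uses the branching-process bound $1+\sum_n(1-\psi_n^H(0))\le 1/(1-\rho)$; and your dominated-convergence alternative shows that Lemma~\ref{lemma:FVT} is not actually needed.
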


\begin{proof}
We apply Lemma \ref{lemma:FVT} to (\ref{eq:theorem:q_n^*(z)}).
Specifically, multiplying both sides of (\ref{eq:theorem:q_n^*(z)}) by
$(1-\omega) \omega^n$ ($|\omega| < 1$) and summing up both sides from
$n=1$ to $\infty$ yield
\begin{align*}
(1-\omega) \sum_{n=1}^{\infty}
q_n^*(z) \omega^n
&=
(1-\omega) \sum_{n=1}^{\infty}
\bigg(q_0^*\big(\psi_n^H(z)\big) \prod_{k=1}^n \psi_k^V(z)\bigg)
\omega^n
\\
&\qquad\qquad\qquad {} 
-
(1-\omega) \sum_{n=1}^{\infty}
q_n^*(0) \omega^n
\times
\sum_{n=1}^{\infty}
\bigg(
\big( 1 - \psi_n^H(z) \big) \prod_{k=1}^n \psi_k^V(z)
\bigg)
\omega^n.
\end{align*}
Taking the limit $\omega \to 1-$, noting
(\ref{eq:limiting_distribution}), and applying Lemma \ref{lemma:FVT}
to the above equation, we obtain
\begin{equation}
\ell_E^*(z)
=
\lim_{\omega \to 1-}
(1-\omega) \sum_{n=1}^{\infty}
\bigg(q_0^*\big(\psi_n^H(z)\big) \prod_{k=1}^n \psi_k^V(z)\bigg)
\omega^n
-
\ell_E^*(0)
\times
\lim_{\omega \to 1-}
\sum_{n=1}^{\infty}
\bigg(
\big( 1 - \psi_n^H(z) \big) \prod_{k=1}^n \psi_k^V(z)
\bigg)
\omega^n.
\label{eq:ell_E-limit}
\end{equation}
Note here that 
\[
\lim_{z \to 1-} q_0^*(z)=1,
\qquad
\lim_{n \to \infty} \psi_n^H(z)=1,
\qquad
0 < \psi_k^V(z) < \psi_k^V(1) = 1,
\quad
z \in (0,1),
\]
and $0 < \prod_{k=1}^n \psi_k^V(z) < 1$ ($z \in (0,1)$) is a decreasing
function of $n$. It then follows from Lemma \ref{lemma:FVT} that 
\begin{equation}
\lim_{\omega \to 1-}
(1-\omega) \sum_{n=1}^{\infty}
\bigg(q_0^*\big(\psi_n^H(z)\big) \prod_{k=1}^n \psi_k^V(z)\bigg)
\omega^n
=
\lim_{n \to \infty}
\bigg(q_0^*\big(\psi_n^H(z)\big) \prod_{k=1}^n \psi_k^V(z)\bigg)
=
\prod_{k=1}^{\infty} \psi_k^V(z).
\label{eq:1st-limit}
\end{equation}
On the other hand, 
\begin{equation}
\lim_{\omega \to 1-}
\sum_{n=1}^{\infty}
\bigg(
\big( 1 - \psi_n^H(z) \big) \prod_{k=1}^n \psi_k^V(z)
\bigg)
\omega^n
=
\sum_{n=1}^{\infty}
\bigg(
\big( 1 - \psi_n^H(z) \big) \prod_{k=1}^n \psi_k^V(z)
\bigg),
\label{eq:2nd-limit}
\end{equation}
and we can show the finiteness of the infinite sum on the right-hand
side of (\ref{eq:2nd-limit}) as follows.  Note first that
\[
\sum_{n=1}^{\infty}
\bigg(
\big( 1 - \psi_n^H(z) \big) \prod_{k=1}^n \psi_k^V(z)
\bigg)
\leq
\sum_{n=1}^{\infty}
\big(1 - \psi_n^H(z)\big)
\leq 
\sum_{n=1}^{\infty}
\big(1-\psi_n^H(0)\big),
\]
where $1-\psi_n^H(0)$ can be regarded as the probability that the
number of generations in a busy period of the ordinary M/G/1 queue is
greater than $n$. We thus have
\begin{align*}
1 + \sum_{n=1}^{\infty} \big(1-\psi_n^H(0)\big)
&=
\mbox{the mean number of generation in a busy period of the ordinary M/G/1 queue}
\\
&\leq 
\mbox{the mean number of customers served in a busy period of the ordinary 
M/G/1 queue}
\\
&= 
\frac{1}{1-\rho}.
\end{align*}
Substituting (\ref{eq:1st-limit}) and (\ref{eq:2nd-limit}) into
(\ref{eq:ell_E-limit}), we obtain (\ref{eq:ell_E^*(z)}).
Finally, (\ref{eq:ell_E^*(0)}) is obtained by substituting $z=0$ 
in (\ref{eq:ell_E^*(z)}) and rearranging terms.
\end{proof}

In summary, we have the following theorem.

\begin{theorem}\label{theorem:L-summary}
If $\rho < 1$ and $\E[V] < \infty$, the system is stable and the PGF
$\ell^*(z)$ of the stationary queue length distribution is given by
\begin{equation}
\ell^*(z) 
=
\ell_{\rm M/G/1}^*(z)
\bigg[
\frac{\ell_{\rm E}^*(0)}{\lambda\E[V]+\ell_{\rm E}^*(0)}
+
\frac{\lambda\E[V]}{\lambda\E[V]+\ell_{\rm E}^*(0)}
\cdot
\frac{\ell_{\rm E}^*(z)}{a_V^*(z)}
\cdot
\frac{1-a_V^*(z)}{\lambda\E[V](1-z)}
\bigg],
\quad
|z| < 1,
\label{eq:theorem-ell^*(z)}
\end{equation}
where $\ell_{\rm M/G/1}^*(z)$, $\ell_E^*(z)$, and $\ell_E^*(0)$ are
given in (\ref{eq:ell_M/G/1}), (\ref{eq:ell_E^*(z)}), and
(\ref{eq:ell_E^*(0)}), respectively.
\end{theorem}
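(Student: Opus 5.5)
Theorem \ref{theorem:L-summary} is essentially an assembly of results already established, so the plan is to combine them rather than derive anything new. First I would settle stability. The embedded chain $\{Q_n\}$ of queue lengths at vacation ends is irreducible, aperiodic and positive recurrent when $\rho<1$ and $\E[V]<\infty$, by the drift argument preceding Theorem \ref{theorem:q_n^*(z)}. Its limiting distribution (\ref{eq:limiting_dist}) therefore exists and is independent of $Q_0$. The vacation cycle (an idle period or a vacation, plus the services of the gated batch) has finite mean length: each cycle consists of $V$ or an idle period of mean $1/\lambda$, plus $Q_n$ services. Since $\E[L_{\rm E}]<\infty$, the mean cycle length is finite. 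To see $\E[L_{\rm E}]<\infty$, I would differentiate (\ref{eq:ell_B(z)-ell_E(z)}) at $z=1$, which gives $\E[L_{\rm E}]=\rho\,\E[L_{\rm E}]+\lambda\E[V]+\ell_{\rm E}^*(0)(\ldots)$, a finite solution since $\rho<1$. A standard regenerative argument then yields a stationary regime of the continuous-time process, which is the stability claim.

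Next I would invoke the stochastic decomposition (\ref{eq:stochastic_decomposition}). Together with the description (\ref{eq:L_V}) of $L_{\rm V}$ and the equilibrium transform $\E[z^{N_\lambda(\hat V)}]=(1-a_V^*(z))/(\lambda\E[V](1-z))$, it gives (\ref{eq:ell^*(z)-decomposition}). The only remaining step is to replace $\ell_{\rm B}^*(z)$ by something expressed through $\ell_{\rm E}^*$. For this I would use the first identity in (\ref{eq:ell_B(z)-ell_E(z)}), $\ell_{\rm E}^*(z)=\ell_{\rm B}^*(z)a_V^*(z)$, giving $\ell_{\rm B}^*(z)=\ell_{\rm E}^*(z)/a_V^*(z)$. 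Division is legitimate because $a_V^*(z)=v^*(\lambda-\lambda z)$ does not vanish for $|z|<1$: it is the PGF of $N_\lambda(V)$ with $\Pr(N_\lambda(V)=0)>0$. At least for $z\in[0,1)$ it is strictly positive, and on the rest of the disc I would argue by analytic continuation, or simply note that $\ell_{\rm E}^*(z)/a_V^*(z)=\ell_{\rm B}^*(z)$ is an analytic PGF. Substituting this into (\ref{eq:ell^*(z)-decomposition}) gives (\ref{eq:theorem-ell^*(z)}), with $\ell_{\rm E}^*(z)$ and $\ell_{\rm E}^*(0)$ supplied by Theorem \ref{theorem:ell_E^*(z)} and $\ell^*_{\rm M/G/1}(z)$ by (\ref{eq:ell_M/G/1}).

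The main obstacle is rigor in the stability and decomposition steps rather than algebra. Specifically, I need to confirm that the model satisfies the hypotheses of Fuhrmann's generalized-vacation decomposition \cite{Fuhrmann85}: vacation or idle decisions must not depend on future arrivals, and customers must be served in a work-conserving manner during service periods. I also need to check that the mixing weights in (\ref{eq:L_V}) correctly reflect the time fractions $\ell_{\rm E}^*(0)/\lambda$ and $\E[V]$ per cycle. I would verify the latter with renewal-reward, noting that each vacation cycle contains exactly one vacation, and that an idle period (of mean $1/\lambda$) occurs precisely when the preceding vacation ends with an empty system, which has probability $\ell_{\rm E}^*(0)$.
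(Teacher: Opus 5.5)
The core of your argument is the same as the paper's proof. That proof consists only of substituting $\ell_{\rm B}^*(z)=\ell_{\rm E}^*(z)/a_V^*(z)$ (the first identity in (\ref{eq:ell_B(z)-ell_E(z)})) into (\ref{eq:ell^*(z)-decomposition}), with stability resting on the positive-recurrence argument for $\{Q_n\}$ given before Theorem \ref{theorem:q_n^*(z)}. Two of the extra justifications you add, however, do not work as written.

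First, differentiating (\ref{eq:ell_B(z)-ell_E(z)}) at $z=1$ cannot establish $\E[L_{\rm E}]<\infty$. The resulting relation $x=\rho x+c$ is also satisfied by $x=\infty$, so the step is circular. You are right that finiteness is needed, since the mean vacation cycle is $(\E[L_{\rm E}]+\ell_{\rm E}^*(0))\E[H]+\E[V]+\ell_{\rm E}^*(0)/\lambda$; the paper does not address this explicitly. It has to come from elsewhere. For example, start from $Q_0=0$, which is legitimate because the limit does not depend on $Q_0$. The identity $\E[Q_n]=\rho\,\E[Q_{n-1}]+\rho\Pr(Q_{n-1}=0)+\lambda\E[V]$ gives $\sup_n\E[Q_n]\le(\rho+\lambda\E[V])/(1-\rho)$, and Fatou's lemma together with (\ref{eq:limiting_dist}) then yields $\E[L_{\rm E}]<\infty$.

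Second, your claim that $a_V^*$ cannot vanish on $|z|<1$ because $\Pr(N_\lambda(V)=0)>0$ is false. Take $V=0$ with probability $p<1/2$ and $V=d$ otherwise. Then $a_V^*(z)=p+(1-p)e^{-\lambda d(1-z)}$ vanishes at $z=1-(\ln\frac{1-p}{p}-i\pi)/(\lambda d)$, which lies in $|z|<1$ once $\lambda d$ is large. So the formula must be read as your fallback suggests. It holds wherever $a_V^*(z)\neq 0$, in particular on $[0,1)$. Any zeros of $a_V^*$ are removable singularities of $\ell_{\rm E}^*(z)/a_V^*(z)=\ell_{\rm B}^*(z)$, which is analytic on the disk.

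With these two repairs the proposal is complete.
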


\begin{proof}
The theorem follows from (\ref{eq:ell_B(z)-ell_E(z)}) and 
(\ref{eq:ell^*(z)-decomposition}).
\end{proof}

We now consider the factorial moments of the stationary queue length $L$.
We define $\overline{L}^{(n)}$ as the $n$th factorial moment of $L$.
It then follows that 
\[
\overline{L}^{(n)}
=
\E[L(L-1)\cdots(L-n+1)]
=
\lim_{z \to 1-} \frac{\dd^n}{\dd z^n} \ell^*(z),
\quad
n=1,2,\ldots.
\]
To compute $\overline{L}^{(n)}$, we may rewrite 
(\ref{eq:theorem-ell^*(z)}) as
\begin{equation}
\ell^*(z) a_V^*(z)
=
\ell_{\rm M/G/1}^*(z)
\bigg[
\frac{\ell_{\rm E}^*(0)}{\lambda\E[V]+\ell_{\rm E}^*(0)} \cdot a_V^*(z)
+
\frac{\lambda\E[V]}{\lambda\E[V]+\ell_{\rm E}^*(0)}
\cdot
\frac{1-a_V^*(z)}{\lambda\E[V](1-z)}
\cdot
\ell_{\rm E}^*(z)
\bigg].
\label{eq:ell^*(z)a_V^*(z)}
\end{equation}
We then define $\overline{L}_{\rm M/G/1}^{(n)}$, $\overline{A}_H^{(n)}$, 
and $\overline{A}_V^{(n)}$ as
\[
\overline{L}_{\rm M/G/1}^{(n)}
=
\lim_{z \to 1-} \frac{\dd^n}{\dd z^n} \ell_{\rm M/G/1}^*(z),
\qquad
\overline{A}_H^{(n)}
=
\lim_{z \to 1-} \frac{\dd^n}{\dd z^n} a_H^*(z),
\qquad
\overline{A}_V^{(n)}
=
\lim_{z \to 1-} \frac{\dd^n}{\dd z^n} a_V^*(z),
\quad
n=1,2,\ldots.
\]
It is easy to see that for $n=1,2,\ldots$,
\[
\overline{A}_H^{(n)}
=
\lambda^n \E[H^n],
\qquad
\overline{A}_V^{(n)}
=
\lambda^n \E[V^n],
\qquad
\overline{L}_{\rm M/G/1}^{(n)} 
= 
\sum_{k=0}^{n-1} \binom{n+1}{k}
\frac{\overline{A}_H^{(n+1-k)}
\overline{L}_{\rm M/G/1}^{(k)}}{(n+1) (1-\rho)}
+ \overline{A}_H^{(n)}, 
\]
and
\[
\lim_{z \to 1-} \frac{\dd^n}{\dd z^n} 
\frac{\lambda\E[V]}{\lambda\E[V]+\ell_{\rm E}^*(0)}
\cdot
\frac{1-a_V^*(z)}{(\lambda-\lambda z)\E[V]}
=
\frac{1}{\lambda\E[V]+\ell_{\rm E}^*(0)}
\cdot
\frac{\overline{A}_V^{(n+1)}}{n+1},
\]
where $\overline{L}_{\rm M/G/1}^{(0)} = 1$.
It then follows from (\ref{eq:ell^*(z)a_V^*(z)}) that 
\begin{align*}
\overline{L}^{(n)}
&=
\sum_{k=0}^n \overline{L}_{\rm M/G/1}^{(n-k)}
\bigg[
\frac{\ell_E^*(0)}{\lambda\E[V]+\ell_E^*(0)}
\lambda^k \E[V^k]
+
\frac{\lambda\E[V]}{\lambda\E[V]+\ell_E^*(0)}
\sum_{i=0}^k 
\binom{k}{i} \frac{\lambda^{k+1}\E[V^{k+1}]}{(k+1)\lambda\E[V]}
\overline{L}_E^{(k-i)}
\bigg]
\\
&\qquad\qquad {} 
-
\sum_{k=0}^{n-1}
\binom{n}{k} \overline{L}^{(k)} \lambda^{n-k} \E[V^{n-k}],
\end{align*}
where 
\[
\overline{L}_E^{(n)}
=
\lim_{z \to 1-} \frac{\dd^n}{\dd z^n} \ell_E^*(z),
\quad
n=1,2,\ldots.
\]
Therefore, the computation of $\overline{L}^{(n)}$ is reduced to 
the computation of $\overline{L}_E^{(n)}$.

It follows from (\ref{eq:ell_B(z)-ell_E(z)}) that
\[
\ell_{\rm E}^*(z) 
= 
\ell_{\rm E}^*( a_H^*(z) ) a_V^*(z)
-
\ell_{\rm E}^*(0)
\big[1 - \ell_{\rm E}^*(0) a_H^*(z)\big] a_V^*(z).
\]
Because the general formula for the high-order derivatives
of a composite function is not available, we have to compute 
$\overline{L}_E^{(n)}$ manually. For example,
\[
\overline{L}_E^{(1)}
=
\rho \overline{L}_E^{(1)} + \lambda \E[V] + \rho \ell_E^*(0),
\qquad
\overline{L}_E^{(2)}
=
\rho^2 \overline{L}_E^{(2)} 
+ 
\big(\overline{A}_H^{(2)} + 2 \rho \lambda\E[V]\big)
\big(\overline{L}_E^{(1)} + \ell_E^*(0)\big)
+\overline{A}_V^{(2)},
\]
so that 
\[
\overline{L}_E^{(1)}
=
\frac{\lambda \E[V] + \rho \ell_E^*(0)}{1-\rho},
\qquad
\overline{L}_E^{(2)}
=
\frac{\big(\lambda^2\E[H^2] + 2 \rho \lambda\E[V]\big)
\big(\overline{L}_E^{(1)} + \ell_E^*(0)\big)
+\lambda^2 \E[V^2]}
{1-\rho^2}.
\]
Using such results, we can compute $\overline{L}^{(n)}$
($n=1,2,\ldots$).  In particular,
\begin{align*}
\overline{L}^{(1)}
&=
\overline{L}_{\rm M/G/1}^{(1)}
+
\frac{\lambda\E[V]}{\lambda\E[V]+\ell_{\rm E}^*(0)}
\big( \overline{L}_E^{(1)} - \lambda \E[V] \big)
+
\frac{\lambda\E[V]}{\lambda\E[V]+\ell_{\rm E}^*(0)}
\cdot
\frac{\lambda^2 \E[V^2]}{2\lambda\E[V]}
\\
&=
\frac{\lambda^2\E[H^2]}{2(1-\rho)} + \rho
+
\frac{\rho}{1-\rho} \lambda\E[V]
+
\frac{\lambda\E[V]}{\lambda\E[V]+\ell_{\rm E}^*(0)}
\cdot
\frac{\lambda^2 \E[V^2]}{2\lambda\E[V]}.
\end{align*}

\begin{remark}
In the stationary M/G/1 queue with multiple vacations under the gated
service discipline, the mean queue length $\E[L_{\rm MV}]$ is given by
\cite{Takagi91,Tian06}
\[
\E[L_{\rm MV}]
=
\frac{\lambda^2\E[H^2]}{2(1-\rho)} + \rho
+
\frac{\rho}{1-\rho} \lambda\E[V]
+
\frac{\lambda^2 \E[V^2]}{2\lambda\E[V]}.
\]
We thus have
\[
\E[L_{\rm MV}] - \E[L]
=
\frac{\ell_E^*(0)}{\lambda\E[V]+\ell_{\rm E}^*(0)}
\cdot
\frac{\lambda^2 \E[V^2]}{2\lambda\E[V]} 
> 0.
\]
\end{remark}

Next, we consider the system delay under the assumption that customers
are served on a FIFO basis.  Let $D$ denote the system delay of a
randomly chosen tagged customer in steady state and let
$d^*(s)=\E[\exp(-s D)]$ ($s > 0)$.

\begin{corollary}
If $\rho < 1$ and $\E[V] < \infty$, the LST $d^*(s)$ of the system delay $D$
in the stationary FIFO system is given by
\begin{equation}
d^*(s) 
=
\frac{(1-\rho)s h^*(s)}{s - \lambda + \lambda h^*(s)}
\bigg[
\frac{\ell_{\rm E}^*(0)}{\lambda\E[V]+\ell_{\rm E}^*(0)}
+
\frac{\lambda\E[V]}{\lambda\E[V]+\ell_{\rm E}^*(0)}
\cdot
\frac{d_{\rm E}^*(s)}{v^*(s)}
\cdot
\frac{1-v^*(s)}{\E[V]s}
\bigg],
\label{eq:d^*(s)}
\end{equation}
where 
\[
d_{\rm E}^*(s)
=
\prod_{k=1}^{\infty} \zeta_k^V(s)
-
\ell_E^*(0)
\sum_{n=1}^{\infty}
\big( 1 - \zeta_n^H(s) \big) \prod_{k=1}^n \zeta_k^V(s),
\]
and $\zeta_k^H(s)$ and $\zeta_k^V(s)$ are determined recursively by
\begin{align*}
\zeta_1^H(s) 
&=
h^*(s)
\qquad
\zeta_n^H(s) 
= 
\psi_{n-1}^H\big(h^*(s)\big),
\quad 
n=2,3,\ldots
\\
\zeta_1^V(s) 
&=
v^*(s), 
\qquad 
\zeta_n^V(s) = v^*\big(\lambda-\lambda \zeta_{n-1}^H(s)\big),
\quad 
n=2,3,\ldots.
\end{align*}
The $n$th ($n=1,2,\ldots$) moment $\overline{D}^{(n)}$ of $D$ is given by
\begin{equation}
\overline{D}^{(n)}
=
\frac{\overline{L}^{(n)}}{\lambda^n},
\quad
n=1,2,\ldots.
\label{eq:overline{D}}
\end{equation}
\end{corollary}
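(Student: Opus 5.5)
The plan is to use the distributional form of Little's law for FIFO M/G/1-type systems. Customers leave in the order they arrive, and the customers left behind at a departure are exactly those that arrived during the departing customer's sojourn. Hence the number left behind by a departing tagged customer equals $N_{\lambda}(D)$, the number of Poisson arrivals during its system delay $D$. By PASTA and the level-crossing (Burke) argument, the number left behind by a departure has the same distribution as the time-stationary queue length $L$. This gives
\[
\ell^*(z)=\E\big[z^{N_\lambda(D)}\big]=d^*(\lambda-\lambda z),
\]
so $d^*(s)=\ell^*(1-s/\lambda)$ for $0<s<\lambda$. The identity extends to all $s>0$ by analytic continuation, or more directly because both sides are then written in terms of $h^*$, $v^*$ and the iterates, which are defined for all $s>0$.

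The substitution $z=1-s/\lambda$ is then carried out factor by factor in (\ref{eq:theorem-ell^*(z)}).
\begin{itemize}
\item Since $\lambda-\lambda z=s$, we get $a_H^*(z)=h^*(s)$ and $a_V^*(z)=v^*(s)$.
\item The M/G/1 factor becomes $\ell_{\rm M/G/1}^*(z)=(1-\rho)(z-1)h^*(s)/(z-h^*(s))$. Multiplying numerator and denominator by $-\lambda$ gives $(1-\rho)s h^*(s)/(s-\lambda+\lambda h^*(s))$, the Pollaczek--Khinchine delay LST.
\item The vacation factor becomes $(1-a_V^*(z))/(\lambda\E[V](1-z))=(1-v^*(s))/(\E[V]s)$.
\item For the iterates, I will verify by induction that $\psi_n^H(1-s/\lambda)=\zeta_n^H(s)$. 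The case $n=1$ is $a_H^*(z)=h^*(s)$. For the step, $\psi_n^H(z)=\psi_{n-1}^H(a_H^*(z))=\psi_{n-1}^H(h^*(s))=\zeta_n^H(s)$, using the second form of (\ref{eq:psi^H}).
\item Similarly, $\psi_n^V(z)=a_V^*(\psi_{n-1}^H(z))=v^*(\lambda-\lambda\zeta_{n-1}^H(s))=\zeta_n^V(s)$.
\end{itemize}
Therefore $\ell_E^*(1-s/\lambda)=d_{\rm E}^*(s)$ by (\ref{eq:ell_E^*(z)}), with the same constant $\ell_E^*(0)$. Substituting these pieces yields (\ref{eq:d^*(s)}).

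For the moments I differentiate $\ell^*(z)=d^*(\lambda-\lambda z)$ $n$ times at $z\to1-$. Each derivative brings out a factor $-\lambda$, so
\[
\overline{L}^{(n)}=(-\lambda)^n\,\frac{\dd^n}{\dd s^n}d^*(s)\Big|_{s=0+}=\lambda^n\E[D^n].
\]
Equivalently, the $n$th factorial moment of a mixed Poisson $N_\lambda(D)$ equals $\lambda^n\E[D^n]$. This gives (\ref{eq:overline{D}}).

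The main obstacle is justifying the first step, $L\stackrel{d}{=}N_\lambda(D)$. This requires the FIFO order, so that the customers present at the tagged customer's departure are precisely those that arrived during its delay $D$. It also requires that departure epochs see the time-stationary distribution, which follows from PASTA together with the up/down-crossing equality for unit jumps. I would state this as the standard distributional Little's law and verify that the gated single-vacation policy does not violate its hypotheses. The policy does not reorder customers, arrivals are Poisson independent of the system state, and the system is stable by Theorem~\ref{theorem:ell_E^*(z)}. Everything after that is the routine substitution above.
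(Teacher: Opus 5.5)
Your proposal is correct and follows the paper's own route: the distributional form of Little's law $\ell^*(z)=d^*(\lambda-\lambda z)$, then the substitution $z=1-s/\lambda$ in Theorem~\ref{theorem:L-summary} (where $\psi_n^H(1-s/\lambda)=\zeta_n^H(s)$ and $\psi_n^V(1-s/\lambda)=\zeta_n^V(s)$ follow directly from the definitions), and differentiation at $z\to1-$ for the moments. The paper singles out one hypothesis that your checklist leaves implicit: the tagged customer's delay $D$ must be independent of arrivals after its own arrival epoch. This holds here because, under gated FIFO service, later arrivals never affect when the tagged customer is served, and it is exactly what justifies $\E[z^{N_\lambda(D)}]=d^*(\lambda-\lambda z)$.
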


\begin{proof}
Note that the
system delay of the tagged customer is independent of subsequent
arrivals of the tagged customer. Therefore, the distributional form of
Little's law \cite{Keilson88} holds, i.e., $\ell^*(z) =
d^*(\lambda-\lambda z)$, or equivalently, 
\begin{equation}
d^*(s) = \ell^*\big((\lambda-s)/\lambda\big),
\label{eq:distributional}
\end{equation}
from which (\ref{eq:overline{D}}) follows.
Eq.\ (\ref{eq:d^*(s)}) follows from (\ref{eq:distributional}) and 
Theorems \ref{theorem:q_n^*(z)}, 
\ref{theorem:ell_E^*(z)}, and \ref{theorem:L-summary}.
\end{proof}

Before closing this section, we remark on the numerical computation of
$\ell_E^*(0)$. It follows from (\ref{eq:ell_E^*(0)}) that
\[
\ell_E^*(0)
= 
\frac{\ds\lim_{n \to \infty} \Psi_n^V}
{1 + \ds\sum_{n=1}^{\infty} (1 - \psi_n^H(0)) \Psi_n^V},
\]
where 
\[
\Psi_1^V=\psi_1^V(0) =v^*(\lambda)
\qquad
\Psi_n^V = \prod_{k=1}^n \psi_k^V(0) = \psi_n^V(0) \Psi_{n-1}^V,
\quad 
n=2,3,\ldots.
\]
Noting (\ref{eq:psi^H}) and (\ref{eq:psi^V}), we have
\begin{gather*}
\psi_1^H(0) = h^*(\lambda),
\qquad
\psi_n^H(0) = h^* \big(\lambda - \lambda \psi_{n-1}^H(0) \big),
\quad
n=2,3,\ldots
\\
\psi_1^V(0) = v^*(\lambda),
\qquad
\psi_n^V(0) = v^*\big( \lambda - \lambda \psi_{n-1}^H(0) \big),
\quad
n=2,3,\ldots
\end{gather*}
Because $\{\psi_n^H(0)\}$ and $\{\psi_n^V(0)\}$ are monotonously
increasing sequences converging to one, we may compute $\ell_E^*(0)$
by
\[
\ell_E^*(0)
\approx 
\frac{\Psi_{n^*}^V}{1 + \ds\sum_{n=1}^{n^*} (1 - \psi_n^H(0)) \Psi_n^V},
\]
where $n^*$ might be the minimum integer such that 
\[
1-\psi_{n^*}^H(0) < 10^{-16},
\qquad
1 - \psi_{n^*}^V(0) < 10^{-16}.
\]

\section{Busy Cycle}\label{section:busy_cycle}

The queue length process after the start of an idle period is
independent of the past history and the lengths of idle periods are
i.i.d.\ according to an exponential distribution with parameter
$\lambda$. We define a busy cycle as an interval from the end of an
idle period to the start of the next idle period. Note that the system
is in an idle period and in a busy period alternately, and we can
regard this as an alternating renewal process.

For convenience, we assume that an arrival occurs at time 0, which
breaks an idle period, and we define $T^*$ as 
\[
T^* = \mbox{the time when the busy cycle ends after time 0}.
\]
We are interested in the joint distribution of $N^*$, $T^*$, and
$M^*$, where
\begin{align*}
N^* &= \mbox{the total number of customers served in $(0,T^*]$},
\\
M^* &= \mbox{the number of vacations before time $T^*$}.
\end{align*}
We define $\theta_n^*(z,\omega)$ as
\begin{equation}
\theta_n(z,\omega) = \E\big[ z^{N^*} e^{-\omega T^*} \one_{\{M^*=n\}}\big],
\quad
n=1,2,\ldots.
\label{eq:theta_n-def}
\end{equation}

To derive $\theta_n^*(z,\omega)$ ($n=2,3,\ldots$), we consider the
following delayed regenerative process.  During the interval
$(0,T^*]$, the system is operated according to the single vacation
policy, i.e., the service of the single customer arriving at time 0
starts. By definition, a vacation ends with the empty system at time
$T^*$. At this moment, we switch the operation policy from the single
vacation to the multiple vacation.  Specifically, if there are no
waiting customers at the end of a vacation, the server takes another
vacation immediately. We note that the regenerative cycle in the
gated-service M/G/1 queue with multiple vacations is studied in
\cite{Takine92}.

We distinguish customers in the above delayed regenerative process,
using ancestral lines in the Galton-Watson branching process
\cite{Fuhrmann85}. Specifically, the first arriving customer at
time 0 is the ancestor (i.e., the 0th generation) of ancestral line 0,
and customers arriving during services of customers in the $(k-1)$st
($k=1,2,\ldots$) generation of the ancestral line 0 are called the
$k$th generation of ancestral line 0.
Let $N_k$ and $B_k^H$ denote the number of customers in the $k$th
generation and the sum of service times of $N_0+N_1+\cdots+N_k$
customers in the 0th to $k$th generations. Note here that $N_0=1$ and
if $N_k=0$, then $N_{k+1}=0$.  We define $\psi_k^H(z,\omega,\eta)$ as
\[
\psi_k^H(z,\omega,\eta)
=
\E\big[ z^{N_0+N_1+\cdots+N_{k-1}}
e^{-\omega B_{k-1}^H}  \eta^{N_k} \big],
\quad
k=1,2,\ldots,
\]
It then follows that
\[
\psi_1^H(z,\omega,\eta)
=
z h^*(\omega+\lambda-\lambda \eta),
\qquad
\psi_k^H(z,\omega, \eta)
=
z h^*\big(\omega + \lambda - \lambda \phi_{k-1}^H(z,\omega,\eta)\big),
\quad
k=2,3,\ldots.
\]

Similarly, customers who arrive during the $n$th ($n=1,2,\ldots$)
vacation after time 0 are defined as the first generation of the
ancestral line $n$, and customers arriving during services of
customers in the $(k-1)$st ($k=2,3,\ldots$) generation of ancestral
line $n$ are called the $k$th generation of ancestral line $n$.  Let
$V_0$ denote the vacation length during which customers in the first
generation arrive and $B_k^V$ ($k=2,3,\ldots$) denote the sum of
$V_0$ and all service times of customers in the first to $k$th
generations.  We then define $\psi_k^V(z,\omega,\eta)$ as
\[
\psi_1^V(z,\omega,\eta)
=
\E\big[ e^{-\omega V_0}  \eta^{N_1} \big],
\qquad
\psi_k^V(z,\omega,\eta)
=
\E\big[ z^{N_1+\cdots+N_{k-1}}
e^{-\omega B_{k-1}^V}  \eta^{N_k} \big],
\quad
k=2,3,\ldots.
\]
It then follows that 
\[
\psi_1^V(z,\omega,\eta)
=
v^*(\omega+\lambda-\lambda \eta),
\qquad
\psi_k^V(z,\omega, \eta)
=
v^*\big(\omega + \lambda - \lambda \phi_{k-1}^H(z,\omega,\eta)\big),
\quad
k=2,3,\ldots.
\]

\begin{remark}
The branching processes of ancestral lines $0,1,2,\ldots,n$ ($n \geq
1$) are mutually independent owing to the independent increment
property of the Poisson process, and except for the branching process
of ancestral line 0, all branching processes are stochastically
identical.
\end{remark}

\begin{theorem}
The joint transform $\theta_n(z,\omega)$ ($n=1,2,\ldots$) in
(\ref{eq:theta_n-def}) is determined recursively by
\begin{align}
\theta_1(z,\omega) 
&=
z h^*(\omega+\lambda) v^*(\omega+\lambda),
\label{eq:theta_1}
\\
\theta_n(z,\omega)
&=
\psi_n^H(z,\omega,0) \prod_{m=1}^n \psi_m^V(z,\omega,0)
-
\sum_{k=1}^{n-1} \theta_k(z,\omega)
\prod_{m=1}^{n-k} \psi_m^V(z,\omega,0),
\quad
n=2,3,\ldots.
\label{eq:theta_n}
\end{align}
\end{theorem}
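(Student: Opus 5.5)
The plan is to prove (\ref{eq:theta_n}) by a first-passage decomposition in the delayed regenerative process introduced above. In that process the single-vacation policy is used on $(0,T^*]$ and the multiple-vacation policy afterwards. For each $n \geq 1$, let $E_n$ be the event that the system is empty at the end of the $n$th vacation after time $0$. We compute the transform
\[
\E\big[z^{(\mbox{\scriptsize customers served by the end of vacation } n)}\, e^{-\omega (\mbox{\scriptsize end of vacation } n)}\, \one_{E_n}\big]
\]
in two ways: directly through the ancestral lines, and by conditioning on $M^*$. Equating the two expressions gives (\ref{eq:theta_n}).

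\textbf{Direct computation.} I first track which generations are present at the end of vacation $n$. The ancestor (generation 0 of line 0) is served first, and then vacation 1 starts. Generation 1 of line 0, which arrived during that service, is served after vacation 1. Inductively, generation $j$ of line 0 is served after vacation $j$, so generation $n$ of line 0 is exactly what remains from line 0 at the end of vacation $n$.

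Line $m$ ($1 \leq m \leq n$) is treated the same way. Its first generation arrives during vacation $m$ and is served after vacation $m$. Its generation $j$ is served after vacation $m+j-1$, so the end of vacation $n$ sees its generation $n-m+1$ waiting.

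Hence $E_n$ occurs iff $N_n=0$ for line 0 and the generation-$(n-m+1)$ size of line $m$ is $0$ for every $m=1,\ldots,n$. Up to the end of vacation $n$, the served customers and the elapsed time split into:
\begin{itemize}
\item services of generations $0,\ldots,n-1$ of line 0;
\item vacations $V_0$ of lines $1,\ldots,n$;
\item services of generations $1,\ldots,n-m$ of line $m$.
\end{itemize}
This is precisely the bookkeeping in $\psi_n^H(z,\omega,\eta)$ and $\psi_{n-m+1}^V(z,\omega,\eta)$. Using the mutual independence of the lines (the Remark above) and setting $\eta=0$, the transform equals
\[
\psi_n^H(z,\omega,0)\prod_{m=1}^n\psi_m^V(z,\omega,0).
\]
The same argument with $n=1$ needs no subtraction: empty at the end of vacation 1 means no arrivals during the ancestor's service or during vacation 1. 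This gives (\ref{eq:theta_1}).

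\textbf{Decomposition by $M^*$.} On $E_n$, the first vacation ending with an empty system is vacation $k=M^*\leq n$. The extended process agrees with the original single-vacation system up to $T^*$, so the contribution of $\{M^*=n\}$ is exactly $\theta_n(z,\omega)$.

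For $k<n$, the system is empty at $T^*$ and the multiple-vacation policy then applies. By the strong Markov property at $T^*$ and the independent increments of the Poisson process, the post-$T^*$ evolution is a fresh copy started empty at the start of a vacation. In it, only ancestral lines $k+1,\ldots,n$ matter, relabelled $1,\ldots,n-k$, and there is no line 0. The same generation-counting argument shows that the transform of being empty at the end of the $(n-k)$th subsequent vacation is $\prod_{m=1}^{n-k}\psi_m^V(z,\omega,0)$. Both the $z$-exponent (customers served) and the $\omega$-exponent (time) are additive across $T^*$, so the transforms multiply.

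Summing over $k$ gives
\[
\psi_n^H(z,\omega,0)\prod_{m=1}^n\psi_m^V(z,\omega,0)=\theta_n(z,\omega)+\sum_{k=1}^{n-1}\theta_k(z,\omega)\prod_{m=1}^{n-k}\psi_m^V(z,\omega,0),
\]
which rearranges to (\ref{eq:theta_n}).

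\textbf{Main obstacle.} I expect the delicate step to be the index bookkeeping in the direct computation. It requires checking that line $m$ contributes exactly $\psi_{n-m+1}^V$, with its vacation $V_0$ and its service times of generations $1,\ldots,n-m$ counted in time and service counts. It also requires checking that line 0 contributes $\psi_n^H$, which includes the ancestor's service and excludes the still-waiting generation $n$. A secondary point is justifying the restart at $T^*$, which requires the lines born after vacation $k$ to be independent of $\mathcal{F}_{T^*}$; this I would take from the Remark together with the strong Markov property.
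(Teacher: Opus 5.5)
Your proposal is correct and follows essentially the same route as the paper. Both arguments identify the customers waiting at the end of the $n$th vacation (generation $n$ of line $0$ and generation $n+1-m$ of line $m$), which gives $\psi_n^H(z,\omega,0)\prod_{m=1}^n\psi_m^V(z,\omega,0)$ as the transform on the event that the system is empty then; they then decompose by the first empty vacation end $M^*=k$ to get the discrete renewal equation. Your justification of the restart at $T^*$ fills in a step the paper only asserts: lines $0,\ldots,k$ are extinct there, and lines $k+1,\ldots,n$ are independent of $(M^*,N^*,T^*)$.
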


\begin{proof}
By definition, we have 
\begin{equation}
\theta_1(z,\omega)
=
\psi_1^H(z,\omega,0) \psi_1^V(z,\omega,0),
\label{eq:theta_1-org}
\end{equation}
from which (\ref{eq:theta_1}) follows.  Next we consider
$\theta_n(z,\omega)$ ($n=2,3,\ldots$).  We note that
$\psi_k^H(z,\omega, 0)$ represents the joint transform of the total
number of customers and the sum of service times of all customers in
ancestral line 0 when $N_k=0$.  Similarly, $\psi_k^V(z,\omega, 0)$
represents the joint transform of the total number of customers and
the sum of the vacation length and service times of all customers, in
ancestral line $m$ ($m \geq 1$) when $N_k=0$.  Let $T_n$
($n=1,2,\ldots$) denote the time when the $n$th vacation ends in the
delayed regenerative process, and let $S_n$ denote the total number of
customers served in $(0,T_n]$. Moreover, we define $R_n$
  ($n=1,2,\ldots$) as
\[
R_n
=
\begin{cases}
0, & \mbox{the system is empty at time $T_n$}, \\
1, & \mbox{otherwise}.
\end{cases}
\]
At time $T_n$, waiting
customers in the system can be classified as follows.  Customers in
the $n$th generation of ancestral line 0, and customers in the
$(n+1-m)$th ($m=1,2,\ldots,n$) generation of ancestral line $m$. Based
on this observation, we obtain
\[
\E\big[ z^{N^{\rm total}(T_n)} e^{-\omega T_n} \one_{\{R_n=0\}} \big]
=
\psi_n^H(z,\omega,0) \prod_{m=1}^n \psi_m^V(z,\omega,0),
\quad
n=1,2,\ldots,
\]
where $N^{\rm total}(t)$ denote the total number of customers served
in $(0,t]$. Therefore, we obtain the following discrete-time renewal
equation.
\[
\psi_n^H(z,\omega,0) \prod_{m=1}^n \psi_m^V(z,\omega,0)
=
\theta_n(z,\omega)
+
\sum_{k=1}^{n-1} \theta_k(z,\omega)
\prod_{m=1}^{n-k} \psi_m^V(z,\omega,0),
\quad
n=2,3,\ldots,
\]
from which (\ref{eq:theta_n}) follows.
\end{proof}

We define $\theta^*(z,\omega)$ as 
\[
\theta^*(z,\omega)
=
\E\big[ z^{N^*} e^{-\omega T^*}\big].
\]

\begin{corollary}
If $\rho < 1$ and $\E[V] < \infty$, the joint transform
$\theta^*(z,\omega)$ of $N^*$ and $T^*$ is given by
\begin{equation}
\theta^*(z,\omega)
=
\frac{\ds\sum_{n=1}^{\infty}
\psi_n^H(z,\omega,0) \ds\prod_{m=1}^n \psi_m^V(z,\omega,0)}
{1 + 
\ds\sum_{n=1}^{\infty}
\ds\prod_{m=1}^n \psi_m^V(z,\omega,0)}.
\label{eq:theta^*}
\end{equation}
\end{corollary}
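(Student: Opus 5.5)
The plan is to sum the renewal-type recursion (\ref{eq:theta_n}) over $n$ with generating functions. Write
\[
A_n = \psi_n^H(z,\omega,0) \prod_{m=1}^n \psi_m^V(z,\omega,0),
\qquad
P_n = \prod_{m=1}^n \psi_m^V(z,\omega,0),
\qquad P_0=1 .
\]
The renewal equation established in the proof of the preceding theorem reads $A_n=\sum_{k=1}^{n}\theta_k(z,\omega)P_{n-k}$ for $n\ge 1$. The case $n=1$ agrees with (\ref{eq:theta_1-org}), because $P_0=1$.

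Multiply by $x^n$ for an auxiliary variable $0<x<1$ and sum over $n\ge1$. The right-hand side is a convolution, so we get
\[
\sum_{n\ge1}A_n x^n = \Big(\sum_{k\ge1}\theta_k x^k\Big)\Big(1+\sum_{n\ge1}P_n x^n\Big).
\]
Since $\theta^*(z,\omega)=\sum_{n\ge1}\theta_n(z,\omega)$, the result will follow by letting $x\to1-$. For this we first restrict to $z\in(0,1]$ and $\omega\ge0$ and extend analytically afterwards. On this range all the quantities $A_n$, $P_n$, $\theta_n$ are nonnegative. By monotone convergence (Abel's theorem for nonnegative series), each generating function then converges to the corresponding full sum as $x\to1-$, possibly $+\infty$.

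The main obstacle is justifying that both limiting series are finite, and in particular that $1+\sum_n P_n<\infty$ so we may divide. The quantity $P_n$ is the transform of the event that ancestral lines $1,\dots,n$ each have died out by generation $n+1-m$. Its value at $z=1,\omega=0$ is exactly $\Pr(R_1=\cdots$ no, more precisely $\Pr(\text{system empty at }T_n$ in the pure multiple-vacation continuation$)$-type probabilities. These values need not be summable: for $\omega=0$ and $z=1$, $P_n$ tends to $\prod_{k=1}^\infty\psi_k^V(1)$ in the notation of Theorem~\ref{theorem:ell_E^*(z)}, which is positive.

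So I would first prove the identity for $\omega>0$. There $\psi_m^V(z,\omega,0)\le v^*(\omega)<1$, hence $P_n\le v^*(\omega)^n$ decays geometrically, and $A_n\le P_n$ as well. Both series therefore converge absolutely, and $x\to1$ gives (\ref{eq:theta^*}) directly. The left side $\sum\theta_n=\theta^*$ is automatically at most $1$.

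For $\omega=0$ the formula is interpreted by continuity, as $\omega\to0+$. Alternatively, the case $\omega=0$, $z=1$ gives a $\infty/\infty$ ratio whose limit equals $1$, consistent with $\Pr(T^*<\infty)=1$. This holds by positive recurrence of $\{Q_n\}$, which was established earlier under $\rho<1$ and $\E[V]<\infty$. Finally, the identity extends to complex $|z|\le1$, $\Re\omega>0$ by analytic continuation, or directly, since the same geometric bounds hold in modulus.
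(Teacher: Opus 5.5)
Your proof is correct and is essentially the paper's argument. Both sum the discrete renewal equation $A_n=\sum_{k=1}^{n}\theta_k P_{n-k}$ over $n$ and solve for $\theta^*$; the paper interchanges the double sum directly at $x=1$, while you pass through the generating function in $x$ and an Abel limit.

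Your convergence analysis is a worthwhile addition that the paper omits. The paper's step of subtracting $\theta^*\sum_n P_n$ needs that series to be finite, which your bound $P_n\le v^*(\omega)^n$ guarantees for $\omega>0$ (this tacitly assumes $\Pr(V>0)>0$). At $(z,\omega)=(1,0)$, by contrast, both series diverge because $P_n\to\prod_{k\ge 1}\psi_k^V(0)>0$, so there the formula only makes sense as a limit. Note the slip in your write-up: it should be $\psi_k^V(0)$, not $\psi_k^V(1)$, which is identically $1$.
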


\begin{proof}
By definition, we have
\[
\theta^*(z,\omega) 
=
\sum_{n=1}^{\infty} \theta_n(z,\omega).
\]
It then follows from (\ref{eq:theta_n}) and (\ref{eq:theta_1-org}) that
\begin{align*}
\theta^*(z,\omega) 
&=
\psi_1^H(z,\omega,0) \psi_1^V(z,\omega,0)
+
\sum_{n=2}^{\infty}
\psi_n^H(z,\omega,0) \prod_{m=1}^n \psi_m^V(z,\omega,0)
-
\sum_{n=2}^{\infty}
\sum_{k=1}^{n-1} \theta_k(z,\omega)
\prod_{m=1}^{n-k} \psi_m^V(z,\omega,0)
\\
&=
\sum_{n=1}^{\infty}
\psi_n^H(z,\omega,0) \prod_{m=1}^n \psi_m^V(z,\omega,0)
-
\sum_{k=1}^{\infty} \theta_k(z,\omega)
\sum_{n=k+1}^{\infty}
\prod_{m=1}^{n-k} \psi_m^V(z,\omega,0)
\\
&=
\sum_{n=1}^{\infty}
\psi_n^H(z,\omega,0) \prod_{m=1}^n \psi_m^V(z,\omega,0)
-
\theta^*(z,\omega)
\sum_{n=1}^{\infty}
\prod_{m=1}^n \psi_m^V(z,\omega,0),
\end{align*}
from which (\ref{eq:theta^*}) follows.
\end{proof}

\section{Applications to batch-service queues}

In this section, we consider the following batch-service M/G/1 queue.
Customers arrive according to a Poisson process with rate $\lambda$
and they are served by a single server. If the system is empty, the
server is idle and as soon as the next customer arrives, the sever
starts the service of this customer. If some customers wait for
service at the end of a service, the server serves all those customers
in batch, and otherwise, the sever becomes idle.  Note that if
multiple customers are served in batch, they leave the system
simultaneously at the end of the service. We assume that the batch
service time $S^{\rm B}$ of a batch with $k$ ($k=1,2,\ldots$)
customers is given by
\[
S^{\rm B} = H_1 + H_2 + \cdots + H_k + V.
\]
In \cite{Inoue21}, this batch-service M/G/1 queue is studied under the
assumption that $S^{\rm B}$ for a batch with $k$ customers is 
given by $S^{\rm B} = k\overline{H}+\overline{V}$, where 
$\overline{H}$ and $\overline{V}$ are constant.

It is easy to see that the PGF of the number of customers at the end
of a randomly chosen batch service in steady state is identical to
$\ell_E^*(z)$ in Theorem \ref{theorem:ell_E^*(z)}. Therefore, this
batch-service queue is stable if and only if $\rho < 1$ and $\E[V] <
\infty$. We define $L_S$ as the number of customers at the beginning
of a randomly chosen service in steady state and let $\ell_S^*(z)$
denote the PGF of $L_S$.  Note here that $L_S$ can also be considered
as a random variable representing the batch size (i.e., the number of
customers served simultaneously) in steady state.  It then follows
that
\begin{equation}
\ell_S^*(z)
=
\ell_E^*(z) + (z-1) \ell_E^*(0).
\label{eq:ell_S^*(z)}
\end{equation}
Let $C$ denote the length of the time interval between two consecutive
starts of batch services in steady state.  We then have
\begin{equation}
\E\big[S^{\rm B}\big] = \big(\E[L_E]+\ell_E^*(0)\big) \E[H]+\E[V],
\qquad
\E[C] = \E[S^{\rm B}] + \ell_E^*(0) \cdot \frac{1}{\lambda}.
\label{eq:ES-EC}
\end{equation}
We define $\rho^*$ as the utilization factor of the server.
\begin{equation}
\rho^* 
=
\frac{\E\big[S^{\rm B}\big]}{\E[C]}
=
\frac{ \rho \E[L_E] + \rho \ell_E^*(0) + \lambda \E[V]}
{\rho \E[L_E] + (1+\rho) \ell_E^*(0) + \lambda \E[V]}.
\label{eq:rho^*}
\end{equation}
Let $\widehat{S}^{\rm B}$ and $\widetilde{S}^{\rm B}$ denote the age
and the residual life of $S^{\rm B}$. We then define $f^*(z,s,\omega)$
as the joint transform of $L_S$, $\widehat{S}^{\rm B}$, and
$\widetilde{S}^{\rm B}$.
\begin{equation}
f^*(z,s,\omega)
=
\E\big[z^{L_S} e^{-s \widehat{S^{\rm B}}^{\rm B}} 
e^{-\omega \widetilde{S^{\rm B}}^{\rm B}}\big]
=
\frac{\ell_S^*\big(z h^*(\omega)\big)v^*(\omega)
- \ell_S^*\big(z h^*(s)\big) v^*(s)}
{\E[S^{\rm B}] (s-\omega)}.
\label{eq:f}
\end{equation}

\begin{theorem}
If $\rho < 1$ and $\E[V] < \infty$, the PGF $\ell^*(z)$ of the number
$L$ of customers in the stationary batch-service M/G/1 queue is given
by
\begin{align*}
\ell^*(z)
&=
1-\rho^* + \rho^* \cdot
\frac{\ell_S^*(z) - \ell_S^*\big(z h^*(\lambda-\lambda z)\big)
v^*(\lambda-\lambda z)}
{\lambda \E[S^{\rm B}] (1-z)},
\end{align*}
where $\ell_S^*(z)$, $\E[S^{\rm B}]$, and $\rho^*$ are given by
(\ref{eq:ell_S^*(z)}), (\ref{eq:ES-EC}), and (\ref{eq:rho^*}),
respectively.
\end{theorem}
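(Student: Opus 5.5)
We condition on the server state at an arbitrary time. By PASTA and the alternating structure (idle period, then batch service, repeated), the time-stationary probability that the server is idle is $1-\rho^*$, with $\rho^*$ given by (\ref{eq:rho^*}). This holds because each inter-start interval $C$ consists of a batch service $S^{\rm B}$, possibly followed by an idle period of mean $1/\lambda$ occurring with probability $\ell_E^*(0)$. During an idle period the system is empty, which contributes the term $1-\rho^*$. So the whole task is to compute $\E[z^L \mid \mbox{server busy}]$ and show it equals
\[
\frac{\ell_S^*(z) - \ell_S^*\big(z h^*(\lambda-\lambda z)\big) v^*(\lambda-\lambda z)}{\lambda \E[S^{\rm B}] (1-z)}.
\]

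Given that the server is busy at an arbitrary time, standard renewal-reward reasoning shows that the current batch service is length-biased. Its batch size $L_S$, elapsed time $\widehat{S}^{\rm B}$ and residual time $\widetilde{S}^{\rm B}$ then have the joint transform $f^*(z,s,\omega)$ in (\ref{eq:f}). In the numerator, the factor $\ell_S^*(zh^*(\cdot))v^*(\cdot)$ is the joint transform of the batch size and of $S^{\rm B}=H_1+\cdots+H_{L_S}+V$, since the service time is made up of $L_S$ i.i.d. $H$'s plus $V$. At the arbitrary time, the number in system is the $L_S$ customers in the batch under service plus the Poisson arrivals during the elapsed time $\widehat{S}^{\rm B}$. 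Here I use the gated/batch rule: arrivals during a service wait and are not served in it, and all $L_S$ batch customers remain until the end of the service because they leave simultaneously. Hence
\[
\E[z^L \mid \mbox{busy}] = \E\big[z^{L_S} e^{-(\lambda-\lambda z)\widehat{S}^{\rm B}}\big] = f^*(z,\lambda-\lambda z,0).
\]

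Substituting $s=\lambda-\lambda z$ and $\omega=0$ into (\ref{eq:f}) and using $h^*(0)=v^*(0)=1$ gives
\[
\frac{\ell_S^*(z)-\ell_S^*\big(zh^*(\lambda-\lambda z)\big)v^*(\lambda-\lambda z)}{\E[S^{\rm B}](\lambda-\lambda z)},
\]
which is exactly the claimed expression. Combining the two cases with weights $1-\rho^*$ and $\rho^*$ yields the theorem. The results taken from earlier in the paper are stability under $\rho<1$, $\E[V]<\infty$, the identification of the end-of-service distribution with $\ell_E^*(z)$ from Theorem \ref{theorem:ell_E^*(z)}, and (\ref{eq:ell_S^*(z)}).

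The main obstacle is to justify (\ref{eq:f}) and the length-biased description rigorously, in particular that the time-average joint law of $(L_S,\widehat{S}^{\rm B})$ over busy time is $\ell_S$-weighted by $S^{\rm B}$ with age uniform on $[0,S^{\rm B}]$. I would prove this by a regenerative (renewal-reward) argument over the sequence of batch starts, which form an embedded positive recurrent Markov chain since $\{Q_n\}$ is. The key computation is
\[
\E\Big[\int_0^{S^{\rm B}} z^{L_S}e^{-s t}e^{-\omega(S^{\rm B}-t)}\dd t\Big] \Big/ \E[S^{\rm B}],
\]
which evaluates directly to (\ref{eq:f}). A second point to check is that Poisson arrivals during the elapsed service time are independent of $(L_S,S^{\rm B})$. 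This holds because $L_S$ is determined before the service starts and the service components are independent of future arrivals.
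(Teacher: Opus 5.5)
Your proposal is correct and follows essentially the same route as the paper. Both split on idle (probability $1-\rho^*$) versus busy, write $L = L_S + N_{\lambda}(\widehat{S}^{\rm B})$ during a batch service, and evaluate $f^*(z,\lambda-\lambda z,0)$ from (\ref{eq:f}); your renewal-reward computation of (\ref{eq:f}) supplies the justification the paper leaves implicit, and PASTA is not actually needed for the idle probability, which is a pure time-average fact.
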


\begin{proof}
It is easy to see that 
\[
L 
= 
\begin{cases}
0, & \mbox{with probability\ $1-\rho^*$},
\\
L_S + N_{\lambda}(\widehat{S}^{\rm B}),
& \mbox{with probability\ $\rho^*$},
\end{cases}
\]
and 
\[
f^*(z,s,0)
=
\E\big[z^{L_S} e^{-s \widehat{S}^{\rm B}} \big]
=
\frac{\ell_S^*(z)
- \ell_S^*\big(z h^*(s)\big) v^*(s)}
{\E[S^{\rm B}] s},
\]
from which, the theorem follows.
\end{proof}

Let $D$ denote the system delay of a randomly chosen tagged customer
in steady state.  

\begin{theorem}
If $\rho < 1$ and $\E[V] < \infty$, the LST $d^*(\omega)$ of the
system delay $D$ in steady state is given by
\[
d^*(\omega)
=
h^*(\omega) v^*(\omega)
\Big[
1-\rho^*
+
\rho^*
\cdot
\frac{\ell_E^*(h^*(\omega)) 
- 
\ell_S^*\big(h^*(\omega+\lambda-\lambda h^*(\omega))\big) 
v^*(\omega+\lambda-\lambda h^*(\omega))}
{\E[S^{\rm B}] \omega}
\Big],
\]
where $\ell_E^*(z)$ and $\ell_S^*(z)$ are given by
(\ref{eq:ell_E^*(z)}) and (\ref{eq:ell_S^*(z)}).
\end{theorem}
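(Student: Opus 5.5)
The plan is to condition on what a randomly chosen tagged customer sees on arrival, using PASTA, and to express everything through the joint transform $f^*(z,s,\omega)$ in (\ref{eq:f}). By PASTA, the tagged customer finds the server idle with probability $1-\rho^*$. In that case it starts service immediately as a batch of size one, so $D=H+V$ and the contribution to $d^*(\omega)$ is $(1-\rho^*)h^*(\omega)v^*(\omega)$. With probability $\rho^*$ it arrives during a batch service. Under the usual regenerative (length-biased) argument, the batch size $L_S$ of that service, its age $\widehat{S}^{\rm B}$ and its residual life $\widetilde{S}^{\rm B}$ then have the joint transform $f^*$.

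In the busy case, the tagged customer is not in the current batch, because arrivals during a service wait for the next one. It waits the residual $\widetilde{S}^{\rm B}$ and is then served in the next batch, whose size is $1+N_\lambda(\widehat{S}^{\rm B})+N_\lambda'(\widetilde{S}^{\rm B})$. Here the two Poisson counts are the other arrivals before and after the tagged one within the current service; they are independent given the age and residual by the independent increments of the Poisson process. The next batch service time is the sum of that many $H_i$ plus $V$, and the customer leaves at its end, so
\[
D=\widetilde{S}^{\rm B}+H_1+\cdots+H_{1+N_\lambda(\widehat{S}^{\rm B})+N_\lambda'(\widetilde{S}^{\rm B})}+V .
\]
Conditioning on $(L_S,\widehat{S}^{\rm B},\widetilde{S}^{\rm B})$ and writing $h=h^*(\omega)$, this gives
\[
\E[e^{-\omega D}\mid \mbox{busy}]
=h^*(\omega)v^*(\omega)\,
\E\big[e^{-(\lambda-\lambda h)\widehat{S}^{\rm B}}e^{-(\omega+\lambda-\lambda h)\widetilde{S}^{\rm B}}\big]
=h^*(\omega)v^*(\omega)\,f^*\big(1,\lambda-\lambda h,\omega+\lambda-\lambda h\big).
\]

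Next I substitute into (\ref{eq:f}) with $z=1$, $s=\lambda-\lambda h$ and residual argument $\omega+\lambda-\lambda h$; the denominator becomes $\E[S^{\rm B}](s-(\omega+\lambda-\lambda h))=-\E[S^{\rm B}]\omega$. Flipping the sign, the numerator becomes
\[
\ell_S^*\big(h^*(\lambda-\lambda h)\big)v^*(\lambda-\lambda h)-\ell_S^*\big(h^*(\omega+\lambda-\lambda h)\big)v^*(\omega+\lambda-\lambda h).
\]
The remaining step is to identify the first term as $\ell_E^*(h^*(\omega))$. The number of customers at the end of a batch service is the number of Poisson arrivals during a service of a batch of size $L_S$, whose PGF is $\ell_S^*(a_H^*(z))a_V^*(z)$. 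This equals $\ell_E^*(z)$, which is the batch-queue analogue of (\ref{eq:ell_B(z)-ell_E(z)}) combined with (\ref{eq:ell_S^*(z)}). Evaluating at $z=h^*(\omega)$ gives exactly the first term of the stated numerator, and adding the idle and busy contributions yields the theorem.

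I expect the main obstacle to be justifying carefully that the busy-case triple $(L_S,\widehat{S}^{\rm B},\widetilde{S}^{\rm B})$ seen by a Poisson arrival has transform $f^*$, including the length-biasing by batch size. I also need the arrivals before and after the tagged customer within the current service to be independent Poisson counts of the age and the residual, independent of $L_S$ given them. I would take (\ref{eq:f}) as given from the paper and rely on PASTA together with the independent-increment property. After that, the algebra with $s-\omega$ is routine.
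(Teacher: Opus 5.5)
Your proposal is correct and follows the paper's route. It uses PASTA to split into the idle case ($D=H+V$) and the busy case, where $D$ is the residual life plus the service of the next batch of size $1+N_\lambda(\widehat{S}^{\rm B})+N_\lambda(\widetilde{S}^{\rm B})$; this gives $h^*(\omega)v^*(\omega)f^*(1,\lambda-\lambda h^*(\omega),\omega+\lambda-\lambda h^*(\omega))$, and the identity $\ell_S^*(a_H^*(z))a_V^*(z)=\ell_E^*(z)$ finishes it, an identity you justify explicitly where the paper uses it without comment.
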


\begin{proof}
Because of the PASTA, the tagged customer finds the empty system on
arrival with probability $1-\rho^*$ and in this case,
$D=H+V$. Otherwise, $D$ is given by the sum of the residual service
time and the service time of the batch in which the tagged customer is
involved. Note here that the number of customers in the served batch
involving the tagged customer is given by
$N_{\lambda}(\widehat{S}^{\rm B}) + 1 + N_{\lambda}(\widetilde{S}^{\rm B})$. 
We thus have
\begin{align*}
D
&=
\begin{cases}
H+V, & \mbox{with probability\ $1-\rho^*$},
\\
\widetilde{S}^{\rm B}
+
\ds\sum_{n=1}^{N_{\lambda}(\widehat{S}^{\rm B})} H_n
+
H_{N_{\lambda}(\widehat{S}^{\rm B})+1}
+
\ds\sum_{n=N_{\lambda}(\widehat{S}^{\rm B})+2}^{N_{\lambda}(\widehat{S^{\rm B}})+1+
N_{\lambda}(\widetilde{S}^{\rm B})} H_n
+ V,
& \mbox{with probability\ $\rho^*$},
\end{cases}
\end{align*}
where the empty sum is defined as zero. We thus have
\[
d^*(\omega)
=
h^*(\omega) v^*(\omega)
\Big[
1-\rho^*
+
\rho^*
f^*\big(1,\lambda-\lambda h^*(\omega),\omega+\lambda-\lambda h^*(\omega)\big)
\Big],
\]
where $f^*(z,s,\omega)$ is given by (\ref{eq:f}) and by definition, 
\begin{align*}
f^*\big(1,\lambda-\lambda z,\omega+\lambda-\lambda z \big)
&=
\frac{\ell_S^*\big(h^*(\omega+\lambda-\lambda z)\big)
v^*(\omega+\lambda-\lambda z)
- \ell_S^*\big(h^*(\lambda-\lambda z)\big) v^*(\lambda-\lambda z)}
{\E[S^{\rm B}] (-\omega)}
\\
&=
\frac{\ell_E^*(z) 
- 
\ell_S^*\big(h^*(\omega+\lambda-\lambda z)\big) v^*(\omega+\lambda-\lambda z)}
{\E[S^{\rm B}] \omega},
\end{align*}
from which the theorem follows.
\end{proof}

\section{Remarks on gated-service MAP/GI/1 vacation queues}

We remark on the inapplicability of the analytical approach in this
paper to gated-service MAP/GI/1 vacation queues. Let $\bm{e}$ denote a
column vector of ones with an appropriate dimension. We assume that
customers arrive according to a MAP with representation
$(\bm{C},\bm{D})$, where $\bm{D} \geq \bm{O}$, $\bm{D}\neq\bm{O}$,
$(\bm{C}+\bm{D})\bm{e} = \bm{0}$, and $(-\bm{C})^{-1}\bm{D}$ is an
irreducible stochastic matrix.  Note that $\bm{C}+\bm{D}$ represents
the infinitesimal generator of a continuous-time Markov chain
$\{Y(t)\}$ on the finite state space $\M=\{1,2,\ldots,|\M|\}$, and
customers arrive when transitions driven by $\bm{D}$ occur.

As shown in Section \ref{section:busy_cycle}, as well as in
\cite{Takine92} for the multiple-vacation case, the queue length
process in gated-service M/GI/1 vacation queues is closely related to
the superposition of the \textit{independent} branching processes with
shifted generations. Note here that the independence of those
branching processes comes from the independent increment property of
Poisson arrivals. In the case of MAP arrivals, however, arrivals in
consecutive intervals are correlated, so that those branching
processes are correlated. As a result, the analytical approach in this
paper is not applicable to gated-service MAP/GI/1 vacation queues.

To clarify our claim, we consider a gated-service MAP/G/1 queue with
single vacations. Let $\tau_n$ denote the end of the $n$th vacation
and we define $\bm{q}_n^*(z)$ as
\[
\bm{q}_n^*(z) 
= 
\big(q_{n,1}^*(z)\ \ q_{n,2}^*(z)\ \ \cdots\ \ q_{n,|\M|}^*(z)\big),
\]
where 
\[
q_{n,i}^*(z) = \E\big[ z^{L_E(\tau_n)} \one_{\{Y(\tau_n)=i\}} \big],
\quad
i \in \M.
\]
It then follows that (cf.\ (\ref{eq:q_n=q_{n-1}}))
\begin{equation}
\bm{q}_n^*(z)
=
\bm{q}_{n-1}^*\big(\bm{A}^*(z)\big)\bm{V}^*(z)
-
\bm{q}_{n-1}^*(0) \big[\bm{I} - \bm{A}^*(z)\big] \bm{V}^*(z),
\quad
n=1,2,\ldots,
\label{eq:vec-q_n=q_{n-1}}
\end{equation}
where 
\[
\bm{A}^*(z) 
= 
\sum_{k=0}^{\infty} \bm{A}(k) z^k 
= 
\int_0^{\infty} \exp[(\bm{C}+z\bm{D})x] \dd H(x),
\qquad
\bm{V}^*(z) 
= 
\sum_{k=0}^{\infty} \bm{V}(k) z^k 
= 
\int_0^{\infty} \exp[(\bm{C}+z\bm{D})x]\dd V(x).
\]
Note that our approach requires that (\ref{eq:vec-q_n=q_{n-1}}) still holds
even if $\bm{A}^*(z)$ is substituted for $z$, i.e., 
\begin{equation}
\bm{q}_n^*\big(\bm{A}^*(z)\big)
=
\bm{q}_{n-1}^*\big(\bm{A}^*\big(\bm{A}^*(z)\big)\big)\bm{V}^*\big(\bm{A}^*(z)\big)
-
\bm{q}_{n-1}^*(0) \big[\bm{I} - \bm{A}^*\big(\bm{A}^*(z)\big)\big] 
\bm{V}^*\big(\bm{A}^*(z)\big),
\quad
n=1,2,\ldots.
\label{eq:vec2-q_n=q_{n-1}}
\end{equation}
Unfortunately, however, (\ref{eq:vec2-q_n=q_{n-1}}) does not hold in
general. See Appendix \ref{appendix:condition} for details.
Therefore, the results in \cite{Vishnevsky11} are not correct in
general.

\appendix

\section{Proof of Lemma \ref{lemma:FVT}}
\label{appendix:proof_lemma:FVT}

Let $b_n = a_n - a^*$ ($n=1,2,\ldots$). We then have for $|\omega| < 1$, 
\begin{align*}
\lim_{\omega \to 1-} (1-\omega) \sum_{n=1}^{\infty} a_n \omega^n
&=
\lim_{\omega \to 1-} (1-\omega) \sum_{n=1}^{\infty} (b_n + a^*) \omega^n
\\
&=
\lim_{\omega \to 1-} (1-\omega) \sum_{n=1}^{\infty} b_n \omega^n
+
\lim_{\omega \to 1-} 
a^* \omega
=
\lim_{\omega \to 1-} (1-\omega) \sum_{n=1}^{\infty} b_n \omega^n
+
a^* .
\end{align*}
Therefore, it suffices to show 
\begin{equation}
\lim_{\omega \to 1-} (1-\omega) \sum_{n=1}^{\infty} b_n \omega^n = 0
\label{eq:FVT=0}
\end{equation}
where 
\begin{equation}
\lim_{n \to \infty} b_n = \lim_{n \to \infty} (a_n-a^*) = 0.
\label{eq:b_n-limit}
\end{equation}
Note that (\ref{eq:b_n-limit}) is equivalent to 
\[
\forall \varepsilon' > 0,\ \exists N \geq 1,\ \forall n \geq N,\
|b_n| < \varepsilon',
\]
so that for all $z \in (0,1)$, 
\[
\bigg|
(1-z) \sum_{n=1}^{\infty} b_n z^n
\bigg|
<
(1-z) \sum_{n=1}^N |b_n| z^n + (1-z) \sum_{n=N+1}^{\infty} \varepsilon' z^n
<
(1-z) \sum_{n=1}^N |b_n| + \varepsilon'
=
(1-z) c + \varepsilon',
\]
where 
\[
c = \sum_{n=1}^N |b_n| \geq 0.
\]
Therefore, if $c=0$, (\ref{eq:FVT=0}) holds. We now assume
$c \neq 0$ and set 
$\varepsilon'=\varepsilon/2$, $\delta=1-\varepsilon/(2c)$.
It then follows that 
\[
\forall \varepsilon > 0,\ \ \forall z \in (\delta,1), \ \ 
\bigg|
(1-z) \sum_{n=1}^{\infty} b_n z^n
\bigg|
<
(1-z) c + \varepsilon'
<
(1-\delta) c + \varepsilon'
=
\frac{\varepsilon}{2c} \times c + \frac{\varepsilon}{2}
=\epsilon
\]
which is equivalent to (\ref{eq:FVT=0}).

\section{The condition under which (\ref{eq:vec2-q_n=q_{n-1}}) holds}
\label{appendix:condition}

We explore the condition under which
(\ref{eq:vec2-q_n=q_{n-1}}) holds.
For simplicity in description, we define 
\[
\bm{q}_n^*(z) 
=
\sum_{k=0}^{\infty} \bm{q}_n(k) z^k,
\qquad
\bm{f}_n^*(z) 
=
\sum_{k=0}^{\infty} \bm{f}_n(k) z^k 
=
\bm{q}_n^*\big(\bm{A}^*(z)\big),
\qquad
\bm{g}_n^*(z) 
=
\sum_{k=0}^{\infty} \bm{g}_n(k) z^k 
=
\bm{q}_n(0) \bm{V}^*(z).
\]
and rewrite (\ref{eq:vec-q_n=q_{n-1}}) to be
\begin{align*}
\sum_{k=0}^{\infty} \bm{q}_n(k) z^k
&=
\sum_{k=0}^{\infty} \bm{f}_{n-1}(k) z^k 
\times 
\sum_{k=0}^{\infty} \bm{V}(k) z^k 
-
\bm{g}_{n-1}(0) \bm{V}^*(z)
+
\bm{g}_{n-1}(0) 
\sum_{j=0}^{\infty} \bm{A}(j) z^j
\sum_{k=0}^{\infty} \bm{V}(k) z^k
\\
&=
\sum_{k=0}^{\infty} 
\bigg[
\sum_{j=0}^k \bm{f}_{n-1}(j) \bm{V}(k-j)
\bigg] z^k 
-
\bm{g}_{n-1}(0) \bm{V}^*(z)
+
\bm{g}_{n-1}(0) 
\sum_{k=0}^{\infty} 
\bigg[
\sum_{j=0}^k
\bm{A}(j) \bm{V}(k-j)
\bigg] z^k.
\end{align*}
The left-hand side of (\ref{eq:vec2-q_n=q_{n-1}}) is then given by
\begin{align}
\bm{q}_n^*\big(\bm{A}^*(z)\big)
&=
\sum_{k=0}^{\infty} 
\bigg[
\sum_{j=0}^k \bm{f}_{n-1}(j) \bm{V}(k-j)
\bigg] \big(\bm{A}^*(z)\big)^k
\notag
\\
&\qquad\qquad {} 
-
\bm{g}_{n-1}(0) \bm{V}^*\big(\bm{A}^*(z)\big)
+
\bm{g}_{n-1}(0) 
\sum_{k=0}^{\infty} 
\bigg[
\sum_{j=0}^k
\bm{A}(j) \bm{V}(k-j)
\bigg] \big(\bm{A}^*(z)\big)^k
\notag
\\
&=
\sum_{j=0}^{\infty} 
\bm{f}_{n-1}(j) 
\sum_{k=0}^{\infty}
\bm{V}(k)
\big(\bm{A}^*(z)\big)^j \big(\bm{A}^*(z)\big)^k
\notag
\\
&\qquad\qquad {}
-
\bm{g}_{n-1}(0) \bm{V}^*\big(\bm{A}^*(z)\big) 
+
\bm{g}_{n-1}(0) 
\sum_{j=0}^{\infty} 
\bm{A}(j) 
\sum_{k=0}^{\infty}
\bm{V}(k)
\big(\bm{A}^*(z)\big)^j \big(\bm{A}^*(z)\big)^k,
\label{eq:left}
\end{align}
while the right-hand side of (\ref{eq:vec2-q_n=q_{n-1}}) is given by
\begin{align}
\lefteqn{%
\bm{q}_{n-1}^*\big(\bm{A}^*\big(\bm{A}^*(z)\big)\big)\bm{V}^*\big(\bm{A}^*(z)\big)
-
\bm{q}_{n-1}^*(0) \big[\bm{I} - \bm{A}^*\big(\bm{A}^*(z)\big)\big] 
\bm{V}^*\big(\bm{A}^*(z)\big)
}\qquad\qquad\qquad
\notag
\\
&=
\bm{f}_{n-1}^*\big(\bm{A}^*(z))\big)\bm{V}^*\big(\bm{A}^*(z)\big)
-
\bm{q}_{n-1}^*(0) \bm{V}^*\big(\bm{A}^*(z)\big)
+
\bm{q}_{n-1}^*(0) \bm{A}^*\big(\bm{A}^*(z)\big)
\bm{V}^*\big(\bm{A}^*(z)\big)
\notag
\\
&=
\sum_{j=1}^{\infty} \bm{f}_{n-1}(j) \big(\bm{A}^*(z))\big)^j
\sum_{k=0}^{\infty} \bm{V}(k) \big(\bm{A}^*(z)\big)^k
\notag
\\
&\qquad\qquad {}
-
\bm{q}_{n-1}^*(0) \bm{V}^*\big(\bm{A}^*(z)\big)
+
\bm{q}_{n-1}^*(0) \sum_{j=0}^{\infty} \bm{A}(j) \big(\bm{A}^*(z)\big)^j
\sum_{k=0}^{\infty} \bm{V}(k) \big(\bm{A}^*(z)\big)^k.
\label{eq:right}
\end{align}
Comparing (\ref{eq:left}) and (\ref{eq:right}), we observe that 
(\ref{eq:vec2-q_n=q_{n-1}}) holds if 
\begin{equation}
\bm{A}^*(z) \bm{V}(k) = \bm{V}(k) \bm{A}^*(z) 
\quad \mbox{for all $k=0,1,\ldots$}.
\label{eq:condition}
\end{equation}
Note that (\ref{eq:condition}) does not hold in general.
For example, we assume $\bm{A}^*(z) \bm{V}(0) = \bm{V}(0) \bm{A}^*(z)$, 
i.e.,
\begin{equation}
\int_0^{\infty} \exp[(\bm{C}+z\bm{D})x] \dd H(x) 
\int_0^{\infty} \exp[\bm{C}y] \dd V(y) 
=
\int_0^{\infty} \exp[\bm{C}y] \dd V(y) 
\int_0^{\infty} \exp[(\bm{C}+z\bm{D})x] \dd H(x) .
\label{eq:condition_example}
\end{equation}
It is readily verified that (\ref{eq:condition_example}) holds if
$\bm{C}$ and $\bm{D}$ are commutative. 

Suppose $\bm{C}$ and $\bm{D}$ are commutative. We then have
\[
\bm{C}\bm{D} = \bm{D}\bm{C}\ \ \Leftrightarrow\ \ 
\bm{D}(-\bm{C})^{-1} = (-\bm{C})^{-1} \bm{D}, 
\]
so that
\[
\bm{D}(-\bm{C})^{-1}\bm{e} 
= 
(-\bm{C})^{-1} \bm{D} \bm{e}
=
(-\bm{C})^{-1} (-\bm{C}) \bm{e}
=
(-\bm{C}) (-\bm{C})^{-1} \bm{e},
\]
which is equivalent to 
\begin{equation}
(\bm{C}+\bm{D}) \times (-\bm{C})^{-1} \bm{e} = \bm{0}.
\label{eq:condition-1}
\end{equation}
Because $\bm{C}+\bm{D}$ is an infinitesimal generator of an irreducible, 
finite-state Markov chain, we have
\begin{equation}
(\bm{C}+\bm{D}) \bm{x} = \bm{0}\ \ \Rightarrow\ \ 
\bm{x} = \alpha \bm{e},
\label{eq:condition-2}
\end{equation}
where $\alpha$ is an arbitrarily constant.  It then follows from
(\ref{eq:condition-1}), $(-\bm{C})^{-1} \bm{e} > \bm{0}$, and
(\ref{eq:condition-2}) that there exists $\lambda > 0$ such that
\[
(-\bm{C})^{-1} \bm{e} = \frac{1}{\lambda} \bm{e}.
\]
Therefore, we have
\begin{equation}
\lambda \bm{e} = (-\bm{C})\bm{e} = \bm{D}\bm{e},
\label{eq:Poisson}
\end{equation}
which implies that the arrival rate is independent of the underlying
Markov chain $\{Y_t\}$.  We thus conclude that if $\bm{C}$ and
$\bm{D}$ are commutative, the MAP with representation
$(\bm{C},\bm{D})$ is reduced to a Poisson process with rate
$\lambda$. In fact, if $\bm{C}\bm{D} = \bm{D}\bm{C}$, we have
(\ref{eq:Poisson}) and
\begin{align*}
\bm{A}^*(z)\bm{e} 
=
\sum_{k=0}^{\infty} \bm{A}(k) \bm{e} z^k
&=
\int_0^{\infty} 
\exp[(\bm{C}+z\bm{D})x]\dd H(x) \bm{e}
=
\int_0^{\infty} 
\exp[(\bm{C}x] \sum_{k=0}^{\infty} \frac{z^k \bm{D}^k \bm{e} x^k}{k!} \dd H(x) 
\\
&=
\int_0^{\infty} 
\exp[(\bm{C}x] \bm{e} 
\sum_{k=0}^{\infty} \frac{z^k x^k \lambda^k}{k!} \dd H(x) 
=
\int_0^{\infty} 
\sum_{j=0}^{\infty} \frac{\bm{C}^j \bm{e} x^j}{j!}
\sum_{k=0}^{\infty} \frac{z^k \lambda^k x^k}{k!} \dd H(x) 
\\
&=
\int_0^{\infty} 
\sum_{j=0}^{\infty} \frac{(-\lambda)^j \bm{e} x^j}{j!}
\sum_{k=0}^{\infty} \frac{z^k \lambda^k x^k}{k!} \dd H(x) 
=
\int_0^{\infty} 
e^{-\lambda x} 
\sum_{k=0}^{\infty} \frac{\lambda^k x^k}{k!} \dd H(x) \bm{e} z^k ,
\end{align*}
so that 
\[
\bm{A}(k) \bm{e} 
= 
\int_0^{\infty} 
e^{-\lambda x} 
\frac{(\lambda x)^k}{k!} \dd H(x) \bm{e},
\quad
k=0,1,\ldots.
\]

\end{document}